\begin{document}

\title{Integrals of polylogarithms and infinite series involving generalized harmonic numbers}
\author{
Rusen Li
\\
\small School of Mathematics\\
\small Shandong University\\
\small Jinan 250100 China\\
\small \texttt{limanjiashe@163.com}
}

\date{
\small 2020 MR Subject Classifications: 33E20, 11B83
}

\maketitle

\def\stf#1#2{\left[#1\atop#2\right]}
\def\sts#1#2{\left\{#1\atop#2\right\}}
\def\e{\mathfrak e}
\def\f{\mathfrak f}

\newtheorem{theorem}{Theorem}
\newtheorem{Prop}{Proposition}
\newtheorem{Cor}{Corollary}
\newtheorem{Lem}{Lemma}
\newtheorem{Example}{Example}
\newtheorem{Remark}{Remark}
\newtheorem{Definition}{Definition}
\newtheorem{Conjecture}{Conjecture}
\newtheorem{Problem}{Problem}

\begin{abstract}
In this paper, we give explicit evaluation for some infinite series involving generalized (alternating) harmonic numbers. In addition, some formulas for generalized (alternating) harmonic numbers will also be derived.
\\
{\bf Keywords:} polylogarithm function, generalized harmonic numbers
\end{abstract}

\section{Introduction and preliminaries}
Let $\mathbb Z$, $\mathbb N$, $\mathbb N_{0}$ and $\mathbb C$ denote the set of integers, positive integers, nonnegative integers and complex numbers, respectively. The well-known polylogarithm function is defined as
$$
Li_{p}(x):=\sum_{n=1}^\infty \frac{x^n}{n^p}\quad (\lvert x \lvert \leq 1,\quad p \in \mathbb N_{0})\,.
$$
Note that when $p=1$, $-Li_{1}(x)$ is the logarithm function $\log(1-x)$. Here, and throughout this paper, we use the natural logarithm (to base $e$). Furthermore, $Li_{n}(1)=\zeta(n)$, where $\zeta(s):=\sum_{n=1}^\infty n^{-s}$ denotes the well-known Riemann zeta function. The classical generalized harmonic numbers of order $m$ is defined by the partial sum of the Riemann Zeta function $\zeta(m)$ as:
$$
H_n^{(m)}:=\sum_{j=1}^n \frac{1}{j^m} \quad (n, m \in \mathbb N)\,.
$$
For convenience, we recall the classical generalized alternating harmonic numbers $\overline{H}_n^{(p)}=\sum_{j=1}^n (-1)^{j-1}/{j^p}$.

It is interesting that infinite series containing harmonic numbers $H_n$ can be expressed explicitly in terms of the logarithms and polylogarithm functions. For instance, De Doelder \cite{Doelder} used the integrals
\begin{align*}
&\int_{0}^{x}\frac{\log^{2}(1-t)}{t}\mathrm{d}t, \int_{0}^{x}\frac{\log(t)\log^{2}(1-t)}{t}\mathrm{d}t, \int_{0}^{1}\frac{\log^{2}(1+t)\log(1-t)}{t^{2}}\mathrm{d}t
\end{align*}
and
\begin{align*}
\int_{0}^{1}\frac{\log^{2}(1+t)\log(1-t)}{t}\mathrm{d}t
\end{align*}
to evaluate infinite series containing harmonic numbers of types $\sum_{n=1}^\infty \frac{H_{n-1}}{n^{2}}x^{n}$, $\sum_{n=1}^\infty \frac{H_{n-1}}{n^{3}}x^{n}$, $\sum_{n=1}^\infty \frac{(H_{n-1})^{2}}{n}x^{n}$ and $\sum_{n=1}^\infty \frac{(H_{n-1})^{2}}{n^{2}}x^{n}$.

When $p=1$, De Doelder \cite{Doelder} gave the following formulas:
\begin{align*}
&\sum_{n=1}^\infty \frac{H_{n-1}}{n}x^n=\frac{1}{2}\log^{2}(1-x)\quad (\lvert x \lvert \leq 1)\,,\\
&\sum_{n=1}^\infty \frac{H_{n-1}}{n^{2}}x^n=\frac{1}{2}\log(x)\log^{2}(1-x)+\log(1-x)Li_{2}(1-x)
-Li_{3}(1-x)\\
&\qquad \qquad \qquad \quad +Li_{3}(1)\quad (0 \leq x \leq 1)\,,\\
&\sum_{n=1}^\infty \frac{(-1)^{n}H_{n-1}}{n^{2}}x^n=\frac{1}{2}\log(x)\log^{2}(1+x)
-\frac{1}{3}\log^{3}(1+x)-Li_{3}\bigg(\frac{1}{1+x}\bigg)\\
&\qquad \qquad \qquad \qquad \quad -\log(1+x)Li_{2}\bigg(\frac{1}{1+x}\bigg)+Li_{3}(1)\quad (0 \leq x \leq 1)\,,\\
&\sum_{n=1}^\infty \frac{H_{n-1}}{n^{3}}=\frac{1}{360}\pi^{4}\,,\\
&\sum_{n=1}^\infty \frac{H_{n-1}(-1)^{n-1}}{n^{3}}=\frac{1}{48}\pi^{4}-2 Li_{4}\bigg(\frac{1}{2}\bigg)-\frac{7}{4}\log(2)\zeta(3)+\frac{1}{12}\pi^{2}\log(2)
-\frac{1}{12}\log^{4}(2)\,.
\end{align*}

In this paper, we give explicit evaluation for infinite series involving generalized (alternating) harmonic numbers of types
$\sum_{n=1}^\infty \frac{H_{n}}{n^{3}}x^{n}$,
$\sum_{n=1}^\infty \frac{H_{n}}{n^{3}}(-x)^{n}$,
$\sum_{n=1}^\infty \frac{H_{n}^{(2)}}{n}x^{n}$,
$\sum_{n=1}^\infty \frac{H_{n}^{(2)}}{n}(-x)^{n}$,
$\sum_{n=1}^\infty \frac{H_{n}^{(2)}}{n^{2}}x^{n}$,
$\sum_{n=1}^\infty \frac{H_{n}^{(2)}}{n^{2}}(-x)^{n}$,
$\sum_{n=1}^\infty \frac{\overline{H}_{n}}{n}x^{n}$,
$\sum_{n=1}^\infty \frac{\overline{H}_{n}^{(2)}}{n}x^{n}$,
$\sum_{n=1}^\infty \frac{\overline{H}_{n}^{(2)}}{n}(-x)^{n}$.
In addition, some formulas for generalized (alternating) harmonic numbers will also be derived.

\section{Infinite series containing generalized harmonic numbers}

Now we establish more explicit formulas for infinite series $\sum_{n=1}^\infty \frac{H_{n}^{(p)}}{n^{m}}x^{n}$.

\begin{theorem}\label{thm1}
Let $p \in \mathbb N$ with $p$ odd and $\lvert x \lvert \leq 1$, then we have
\begin{align*}
\sum_{n=1}^\infty \frac{H_{n}^{(p)}}{n}x^{n}
=\frac{1}{2}\sum_{j=1}^{p}(-1)^{j-1}Li_{j}(x)Li_{p+1-j}(x)+Li_{p+1}(x)\,.
\end{align*}
\end{theorem}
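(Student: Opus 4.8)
The plan is to reduce the claim to a first-order differential equation and integrate. First I would split off the diagonal contribution by writing $H_n^{(p)}=H_{n-1}^{(p)}+n^{-p}$, so that
\begin{align*}
\sum_{n=1}^\infty \frac{H_n^{(p)}}{n}x^n=\sum_{n=1}^\infty \frac{H_{n-1}^{(p)}}{n}x^n+Li_{p+1}(x).
\end{align*}
This isolates the advertised $Li_{p+1}(x)$ term, so it remains to identify $F(x):=\sum_{n=1}^\infty \frac{H_{n-1}^{(p)}}{n}x^n$ with $G(x):=\frac12\sum_{j=1}^p(-1)^{j-1}Li_j(x)Li_{p+1-j}(x)$. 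My strategy is to show $F'=G'$ on $\lvert x\rvert<1$ together with $F(0)=G(0)=0$.

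For the left side I would use the standard generating identity $\sum_{n=1}^\infty H_n^{(p)}x^n=Li_p(x)/(1-x)$, obtained by interchanging the order of summation in $\sum_n x^n\sum_{k\le n}k^{-p}$. Term-by-term differentiation, legitimate inside the unit disk, then gives
\begin{align*}
F'(x)=\sum_{n=1}^\infty H_{n-1}^{(p)}x^{n-1}=\sum_{m=0}^\infty H_m^{(p)}x^m=\frac{Li_p(x)}{1-x}.
\end{align*}

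The crux is differentiating $G$ and exhibiting a telescoping collapse. Setting $Li_0(x):=x/(1-x)$, the uniform rule $\frac{d}{dx}Li_j(x)=Li_{j-1}(x)/x$ holds for all $j\ge 1$ (the case $j=1$ reproducing $(1-x)^{-1}$). Applying the product rule to each term of $G$ and reindexing, I expect the two resulting sums $\sum_{i=0}^{p-1}(-1)^iLi_i(x)Li_{p-i}(x)$ and $-\sum_{i=1}^{p}(-1)^iLi_i(x)Li_{p-i}(x)$ to cancel over the overlapping range $1\le i\le p-1$, leaving only the end terms $\bigl(1-(-1)^p\bigr)Li_0(x)Li_p(x)$. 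Here is exactly where the hypothesis enters: since $p$ is odd, $1-(-1)^p=2$, so these end terms reinforce rather than annihilate, and $G'(x)=\frac1x Li_0(x)Li_p(x)=\frac{Li_p(x)}{1-x}=F'(x)$. Had $p$ been even, the same computation would give $G'\equiv 0$, which explains why the statement is restricted to odd $p$. The main obstacle is therefore nothing but careful index bookkeeping in this cancellation; once it is organized, matching the values at $x=0$ (where every polylogarithm vanishes) yields $F=G$ on $\lvert x\rvert<1$, and a concluding appeal to Abel's theorem extends the identity to $\lvert x\rvert=1$ wherever both sides converge.
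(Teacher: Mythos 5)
Your proof is correct, but it runs in the opposite direction from the paper's. The paper derives the closed form constructively: it writes $\sum_{n\ge1}\frac{H_n^{(p)}}{n+1}x^{n+1}=\int_0^x\frac{Li_p(t)}{1-t}\,\mathrm{d}t$ and integrates by parts $p$ times, after which the original integral reappears with sign $(-1)^p$; for odd $p$ this lets one solve the resulting linear equation, $2\int_0^x\frac{Li_p(t)}{1-t}\,\mathrm{d}t=\sum_{j=1}^p(-1)^{j-1}Li_j(x)Li_{p+1-j}(x)$, which is where the factor $\frac12$ comes from. You instead verify the stated formula by differentiating it: with $Li_0(x)=x/(1-x)$ (consistent with the paper's definition, whose sum starts at $n=1$) the product rule telescopes and leaves only the end terms $\bigl(1-(-1)^p\bigr)Li_0(x)Li_p(x)$, so $G'(x)=Li_p(x)/(1-x)=F'(x)$, with oddness of $p$ entering via $1-(-1)^p=2$; I checked the reindexing and it is sound. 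The two mechanisms are dual --- your telescoping derivative is precisely the derivative of the paper's partially integrated identity --- and both arguments share the splitting $H_n^{(p)}=H_{n-1}^{(p)}+n^{-p}$ and the generating function $Li_p(x)/(1-x)$. What each buys: the paper's iteration produces the intermediate identity at every truncation level $0\le k\le p$, which it then reuses (comparing coefficients) to prove its Corollary on $H_n^{(p)}$, whereas your route is shorter, needs no integral tables, and makes transparent why even $p$ is excluded (there $G'\equiv0$, i.e.\ the bilinear sum vanishes identically); the price is that it presupposes the answer and yields no such family of intermediate formulas. Your closing hedge --- Abel's theorem ``wherever both sides converge'' --- is in fact more careful than the paper's blanket hypothesis $\lvert x\rvert\le1$: at $x=-1$ both sides converge and the identity extends, while at $x=1$ both sides diverge for every odd $p$, a boundary case the paper silently glosses over.
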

\begin{proof}
Integrating the generating function of $H_{n}^{(p)}$, we can write
\begin{align*}
&\quad \sum_{n=1}^\infty \frac{H_{n}^{(p)}}{n+1}x^{n+1}=\int_{0}^{x}\frac{Li_{p}(t)}{1-t}\mathrm{d}t\\
&=Li_{1}(x)Li_{p}(x)-\int_{0}^{x}\frac{Li_{1}(t)Li_{p-1}(t)}{t}\mathrm{d}t\\
&=Li_{1}(x)Li_{p}(x)-Li_{2}(x)Li_{p-1}(x)+\int_{0}^{x}\frac{Li_{2}(t)Li_{p-2}(t)}{t}\mathrm{d}t\\
&=\sum_{j=1}^{k}(-1)^{j-1}Li_{j}(x)Li_{p+1-j}(x)+(-1)^{k}\int_{0}^{x}\frac{Li_{k}(t)Li_{p-k}(t)}{t}\mathrm{d}t \quad (0 \leq k \leq p)\\
&=\sum_{j=1}^{p}(-1)^{j-1}Li_{j}(x)Li_{p+1-j}(x)
+(-1)^{p}\int_{0}^{x}\frac{Li_{p}(t)}{1-t}\mathrm{d}t\,.
\end{align*}
Note that
$$
\sum_{n=1}^\infty \frac{H_{n}^{(p)}}{n}x^{n}=\sum_{n=1}^\infty \frac{H_{n-1}^{(p)}}{n}x^{n}+\sum_{n=1}^\infty \frac{x^{n}}{n^{p+1}}\,,
$$
thus we get the desired result.
\end{proof}

\begin{Cor}
Let $p, n \in \mathbb N$ and $k \in \mathbb N_{0}$ with $0 \leq k \leq p$, then we have
\begin{align*}
H_{n}^{(p)}
=\sum_{j=1}^{k}(-1)^{j-1}\sum_{\ell=1}^{n}\frac{n+1}{\ell^{j}(n+1-\ell)^{p+1-j}}
+(-1)^{k}\sum_{\ell=1}^{n}\frac{1}{\ell^{k}(n+1-\ell)^{p-k}}\,.
\end{align*}
In particular, if $m \in \mathbb N_{0}$, then we can obtain that
\begin{align*}
&\quad H_{n}^{(2m+1)}\\
&=\sum_{j=1}^{m}(-1)^{j-1}\sum_{k=1}^{n-1}\frac{n}{k^{j}(n-k)^{2m+2-j}}
+\frac{(-1)^{m}}{2}\sum_{k=1}^{n-1}\frac{n}{k^{m+1}(n-k)^{m+1}}+\frac{1}{n^{2m+1}}\,,\\
&H_{n}^{(2m)}
=\sum_{j=1}^{m}(-1)^{j-1}\sum_{k=1}^{n-1}\frac{n}{k^{j}(n-k)^{2m+1-j}}
+(-1)^{m}\sum_{j=1}^{n-1}\frac{1}{j^{m}(n-j)^{m}}+\frac{1}{n^{2m}}\,.
\end{align*}
\end{Cor}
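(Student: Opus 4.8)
The plan is to read off the coefficient of $x^{n+1}$ on both sides of the intermediate identity established inside the proof of Theorem~\ref{thm1}, namely
\begin{align*}
\sum_{n=1}^\infty \frac{H_{n}^{(p)}}{n+1}x^{n+1}
=\sum_{j=1}^{k}(-1)^{j-1}Li_{j}(x)Li_{p+1-j}(x)
+(-1)^{k}\int_{0}^{x}\frac{Li_{k}(t)Li_{p-k}(t)}{t}\mathrm{d}t\,,
\end{align*}
which holds for every $k$ with $0\le k\le p$. Using the Cauchy product $Li_{a}(x)Li_{b}(x)=\sum_{N\ge2}x^{N}\sum_{\ell=1}^{N-1}\ell^{-a}(N-\ell)^{-b}$, the coefficient of $x^{n+1}$ in $Li_{j}(x)Li_{p+1-j}(x)$ is $\sum_{\ell=1}^{n}\ell^{-j}(n+1-\ell)^{-(p+1-j)}$. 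Dividing the analogous product by $t$ and integrating from $0$ to $x$ multiplies the $x^{N}$-coefficient by $1/N$, so the coefficient of $x^{n+1}$ in the integral is $\tfrac{1}{n+1}\sum_{\ell=1}^{n}\ell^{-k}(n+1-\ell)^{-(p-k)}$. Matching these against the coefficient $H_{n}^{(p)}/(n+1)$ on the left and multiplying through by $n+1$ yields the first displayed identity of the Corollary.

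For the two specialisations I would first replace $n$ by $n-1$ in that identity and use $H_{n}^{(p)}=H_{n-1}^{(p)}+n^{-p}$, which turns every denominator $n+1-\ell$ into $n-\ell$, produces the prefactor $n$ in the first sum, and isolates the term $n^{-p}$. Taking $p=2m$ and $k=m$ then gives the even case immediately: the trailing term becomes $(-1)^{m}\sum_{\ell=1}^{n-1}\ell^{-m}(n-\ell)^{-m}$, which is already in the stated symmetric form, so no further manipulation is required, and relabelling $\ell$ reproduces the target exactly.

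The odd case $p=2m+1$ with $k=m$ is the single point that needs an extra idea, and I expect it to be the main obstacle. Here the trailing term is $(-1)^{m}\sum_{\ell=1}^{n-1}\ell^{-m}(n-\ell)^{-(m+1)}$, which does not yet match the symmetric expression $\tfrac{(-1)^{m}}{2}\sum_{\ell=1}^{n-1}n\,\ell^{-(m+1)}(n-\ell)^{-(m+1)}$ appearing in the statement. The trick is the reflection $\ell\mapsto n-\ell$: writing $T=\sum_{\ell=1}^{n-1}\ell^{-m}(n-\ell)^{-(m+1)}$ and its reflected partner $T'=\sum_{\ell=1}^{n-1}\ell^{-(m+1)}(n-\ell)^{-m}$, the substitution shows $T=T'$, while the elementary identity $\ell^{-m}(n-\ell)^{-(m+1)}+\ell^{-(m+1)}(n-\ell)^{-m}=n\,\ell^{-(m+1)}(n-\ell)^{-(m+1)}$ gives $T+T'=\sum_{\ell=1}^{n-1}n\,\ell^{-(m+1)}(n-\ell)^{-(m+1)}$. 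Hence $T=\tfrac12\sum_{\ell=1}^{n-1}n\,\ell^{-(m+1)}(n-\ell)^{-(m+1)}$, which is precisely the half-weight middle term demanded by the formula; adding back $n^{-(2m+1)}$ then completes the odd case. Everything else — the coefficient extraction and the index shift — is routine, so spotting this symmetrisation is the only genuine step.
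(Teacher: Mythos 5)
Your proposal is correct and follows exactly the paper's route: the paper likewise extracts the coefficient of $x^{n+1}$ from the intermediate identity in the proof of Theorem~\ref{thm1} and specializes $k$ (the paper's proof is essentially the single line ``comparing the coefficients on both sides gives the desired result''). Your reflection argument $T=T'$ together with $\ell^{-m}(n-\ell)^{-(m+1)}+\ell^{-(m+1)}(n-\ell)^{-m}=n\,\ell^{-(m+1)}(n-\ell)^{-(m+1)}$ correctly supplies the symmetrization step for the odd case $p=2m+1$, $k=m$, which the paper leaves implicit.
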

\begin{proof}
The following formula is obtained in the proof of Theorem \ref{thm1},
\begin{align*}
&\quad \sum_{n=1}^\infty \frac{H_{n}^{(p)}}{n+1}x^{n+1}\\
&=\sum_{j=1}^{k}(-1)^{j-1}Li_{j}(x)Li_{p+1-j}(x)+(-1)^{k}\int_{0}^{x}\frac{Li_{k}(t)Li_{p-k}(t)}{t}\mathrm{d}t \quad (0 \leq k \leq p)\,,
\end{align*}
comparing the coefficients on both sides gives the desired result.
\end{proof}

Before going further, we provide some lemmas.
\begin{Lem}(\cite[p.204]{Lewin})\label{lem0}
Let $0 \leq x < 1$, then we have
\begin{align*}
&\quad \int_{0}^{x}\frac{\log^{2}(t)\log(1-t)}{1-t}\mathrm{d}t\\
&=-2\bigg(Li_{4}(x)+Li_{4}\bigg(\frac{-x}{1-x}\bigg)-Li_{4}(1-x)+Li_{4}(1)
-Li_{3}(x)\log(1-x)\bigg)\\
&\quad -2 Li_{3}(1-x)\log(x)+2Li_{2}(1-x)\log(x)\log(1-x)-\frac{1}{6}\pi^{2}\log^{2}(1-x)\\
&\quad +\frac{1}{2}\log^{2}(x)\log^{2}(1-x)+\frac{1}{3}\log(x)\log^{3}(1-x)-\frac{1}{12}\log^{4}(1-x)\\
&\quad +2Li_{3}(1)\bigg(\log(x)-\log(1-x)\bigg)\,.
\end{align*}
\end{Lem}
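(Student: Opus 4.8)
The plan is to verify the claimed closed form by differentiation rather than by evaluating the integral directly. Denote the right-hand side by $R(x)$ and the left-hand side by $I(x)=\int_0^x \frac{\log^2(t)\log(1-t)}{1-t}\,\mathrm dt$. By the fundamental theorem of calculus $I'(x)=\frac{\log^2(x)\log(1-x)}{1-x}$ and $I(0)=0$, so it suffices to check that $R'(x)$ equals the same integrand and that $R(x)\to 0$ as $x\to 0^+$; this pins down $I=R$ on $[0,1)$.

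For the derivative I would use $\frac{\mathrm d}{\mathrm dx}Li_n(x)=Li_{n-1}(x)/x$ for $n\ge 2$, the identities $Li_1(x)=-\log(1-x)$ and $Li_1(1-x)=-\log(x)$, and the chain-rule values $\frac{\mathrm d}{\mathrm dx}Li_4(1-x)=-Li_3(1-x)/(1-x)$ and $\frac{\mathrm d}{\mathrm dx}Li_4\big(\tfrac{-x}{1-x}\big)=Li_3\big(\tfrac{-x}{1-x}\big)/\big(x(1-x)\big)$. Differentiating term by term sorts the output into three levels. At the $Li_2$ level, the only surviving combination is $\frac{2\log(1-x)}{x}\big(Li_2(x)+Li_2(1-x)\big)$, which I would collapse using Euler's reflection $Li_2(x)+Li_2(1-x)=\tfrac{\pi^2}{6}-\log(x)\log(1-x)$; at the $Li_3$ level the terms in $Li_3(x)$ and $Li_3(1-x)$ cancel once I substitute the Landen relation $Li_3\big(\tfrac{-x}{1-x}\big)+Li_3(1-x)+Li_3(x)=\zeta(3)+\tfrac16\log^3(1-x)+\tfrac{\pi^2}{6}\log(1-x)-\tfrac12\log(x)\log^2(1-x)$. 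These two functional equations are the technical heart of the verification.

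Once the two reflection/Landen identities are applied, everything reduces to elementary rational-logarithmic terms. The one genuinely surviving piece is $\frac{\log^2(x)\log(1-x)}{1-x}$, produced as $\frac{2\log^2(x)\log(1-x)}{1-x}$ (from differentiating $2Li_2(1-x)\log(x)\log(1-x)$ via $Li_1(1-x)=-\log(x)$) minus $\frac{\log^2(x)\log(1-x)}{1-x}$ (from differentiating $\tfrac12\log^2(x)\log^2(1-x)$); all remaining terms of the form $\frac{\log^3(1-x)}{x}$, $\frac{\log^3(1-x)}{1-x}$, $\frac{\pi^2\log(1-x)}{x}$, $\frac{\pi^2\log(1-x)}{1-x}$, $\frac{\log(x)\log^2(1-x)}{x}$ and $\frac{\log(x)\log^2(1-x)}{1-x}$ group into vanishing combinations (the $\zeta(3)$ contribution from the Landen relation cancels against the derivative of the explicit $2Li_3(1)\big(\log(x)-\log(1-x)\big)$ term, and the $\pi^2$, $\log^3$, and mixed pieces cancel against the derivatives of $-\tfrac16\pi^2\log^2(1-x)$, $\tfrac13\log(x)\log^3(1-x)$ and $-\tfrac1{12}\log^4(1-x)$). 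Hence $R'(x)=\frac{\log^2(x)\log(1-x)}{1-x}$.

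For the base point I would check each summand as $x\to 0^+$: the fourth-order terms cancel because $Li_4(1-x)\to Li_4(1)$, the potentially divergent pair $-2Li_3(1-x)\log(x)+2Li_3(1)\log(x)$ tends to $0$ since $Li_3(1)=\zeta(3)$ and $\big(Li_3(1-x)-\zeta(3)\big)\log(x)\to 0$, and every other term carries a factor $x$ or $\log(1-x)\sim -x$ that suppresses the logarithmic singularity, so $R(0^+)=0$. The main obstacle is purely organisational: the differentiation produces on the order of a dozen terms across the three levels, and the cancellation only becomes transparent after the dilogarithm and trilogarithm functional equations are invoked in the right places. As a cross-check I would also integrate by parts with $u=\log^2(t)$ and $\mathrm dv=\frac{\log(1-t)}{1-t}\,\mathrm dt$, giving $I(x)=-\tfrac12\log^2(x)\log^2(1-x)+\int_0^x \frac{\log(t)\log^2(1-t)}{t}\,\mathrm dt$, and then apply $t\mapsto 1-t$ in the last integral to obtain the reflection relation $I(x)+I(1-x)=I(1)-\tfrac12\log^2(x)\log^2(1-x)$, which the closed form must satisfy.
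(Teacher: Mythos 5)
Your proof is correct, but note that the paper does not actually prove this statement: Lemma \ref{lem0} is quoted verbatim from Lewin's book (p.~204) and used as a black box, so your differentiation-based verification is a genuinely different, self-contained route rather than a reconstruction of an argument in the paper. I checked your computation and it holds up: the trilogarithm-level terms assemble, after writing $\frac{1}{x(1-x)}=\frac1x+\frac1{1-x}$, into $-2\left(\frac1x+\frac1{1-x}\right)\left(Li_3(x)+Li_3(1-x)+Li_3\left(\frac{-x}{1-x}\right)\right)$, which Landen's identity converts into elementary terms whose $\zeta(3)$ part cancels exactly against the derivative of $2Li_3(1)\big(\log(x)-\log(1-x)\big)$; the dilogarithm-level terms do reduce to $\frac{2\log(1-x)}{x}\big(Li_2(x)+Li_2(1-x)\big)$ (the two $\frac{2Li_2(1-x)\log(x)}{1-x}$ contributions cancel between the $-2Li_3(1-x)\log(x)$ and $2Li_2(1-x)\log(x)\log(1-x)$ terms), and Euler reflection then kills everything except the claimed integrand, surviving as $\frac{2\log^2(x)\log(1-x)}{1-x}-\frac{\log^2(x)\log(1-x)}{1-x}$ exactly as you describe; and your limit analysis at $x\to 0^+$ correctly handles the only delicate pairing, $\big(Li_3(1)-Li_3(1-x)\big)\log(x)\to 0$. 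What your route buys is a complete and easily auditable proof resting on just two standard functional equations (Euler reflection for $Li_2$ and Landen for $Li_3$, both themselves in Lewin); what the classical derivation buys is the \emph{discovery} of the closed form by repeated integration by parts and Landen-type substitutions, which is presumably how Lewin obtains it. Your closing cross-check is also sound and is consonant with the paper: the integration by parts $\int_{0}^{x}\frac{\log(t)\log^{2}(1-t)}{t}\mathrm{d}t=\frac{1}{2}\log^{2}(x)\log^{2}(1-x)+\int_{0}^{x}\frac{\log^{2}(t)\log(1-t)}{1-t}\mathrm{d}t$ that underlies your reflection relation $I(x)+I(1-x)=I(1)-\frac{1}{2}\log^{2}(x)\log^{2}(1-x)$ is precisely the manipulation the paper performs when it applies this lemma in the proof of its theorem on $\sum_{n=1}^\infty \frac{H_{n}}{n^{3}}x^{n}$.
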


\begin{Lem}(\cite[p.310]{Lewin})\label{lem00}
Let $0 \leq x < 1$, then we have
\begin{align*}
&\int_{0}^{x}\frac{\log^{2}(t)\log(1-t)}{t}\mathrm{d}t
=-2 Li_{4}(x)+2 Li_{3}(x)\log(x)-Li_{2}(x)\log^{2}(x)\,.
\end{align*}
\end{Lem}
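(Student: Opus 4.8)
The plan is to evaluate the integral by repeated integration by parts, exploiting the standard differentiation relations among polylogarithms rather than any series manipulation. The crucial observation is that $\frac{\mathrm{d}}{\mathrm{d}t}Li_{p}(t)=\frac{Li_{p-1}(t)}{t}$ for $p\geq 2$, together with $Li_{1}(t)=-\log(1-t)$, so that $\frac{\log(1-t)}{t}=-\frac{\mathrm{d}}{\mathrm{d}t}Li_{2}(t)$. This lets me recognize the integrand as a product in which one factor is already an exact derivative, making it ready for integration by parts.

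First I would set $u=\log^{2}(t)$ and $\mathrm{d}v=\frac{\log(1-t)}{t}\,\mathrm{d}t=-\mathrm{d}\bigl(Li_{2}(t)\bigr)$, giving $v=-Li_{2}(t)$ and $\mathrm{d}u=\frac{2\log(t)}{t}\,\mathrm{d}t$. Integration by parts then produces the boundary contribution $-\log^{2}(x)Li_{2}(x)$ plus $2\int_{0}^{x}\frac{Li_{2}(t)\log(t)}{t}\,\mathrm{d}t$. For this remaining integral I would use $\frac{Li_{2}(t)}{t}=\frac{\mathrm{d}}{\mathrm{d}t}Li_{3}(t)$ and integrate by parts once more with $u=\log(t)$, which peels off a boundary term $\log(x)Li_{3}(x)$ and leaves $-\int_{0}^{x}\frac{Li_{3}(t)}{t}\,\mathrm{d}t=-Li_{4}(x)$, since $\frac{Li_{3}(t)}{t}=\frac{\mathrm{d}}{\mathrm{d}t}Li_{4}(t)$. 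Assembling these pieces gives exactly $-2Li_{4}(x)+2Li_{3}(x)\log(x)-Li_{2}(x)\log^{2}(x)$, as claimed.

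The only point requiring care — and the nearest thing to an obstacle — is the vanishing of the boundary terms at the lower limit $t=0$, where $\log^{2}(t)$ and $\log(t)$ blow up. Here I would invoke the behaviour $Li_{p}(t)\sim t$ as $t\to 0^{+}$, so that the products $\log^{2}(t)Li_{2}(t)$ and $\log(t)Li_{3}(t)$ behave like $t\log^{2}(t)$ and $t\log(t)$ respectively and hence tend to $0$; this is what legitimizes discarding the lower endpoint in each integration by parts. The restriction $0\leq x<1$ guarantees that all the polylogarithms and logarithms involved stay real and finite throughout the interval of integration, so no convergence or branch-cut issues intrude.
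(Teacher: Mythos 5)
Your proof is correct, and it is worth noting that the paper itself supplies no argument for this lemma at all: it is quoted directly from Lewin's book (\cite[p.310]{Lewin}) as a known evaluation. Your two-step integration by parts — first against $\frac{\log(1-t)}{t}=-\frac{\mathrm{d}}{\mathrm{d}t}Li_{2}(t)$ to trade $\log^{2}(t)$ for $2\log(t)$, then against $\frac{Li_{2}(t)}{t}=\frac{\mathrm{d}}{\mathrm{d}t}Li_{3}(t)$ to reach $Li_{4}$ — is the standard derivation of this identity and reproduces the right-hand side exactly, so your write-up makes the cited result self-contained. You also correctly isolated the one genuine point of care, the improper lower endpoint: the integral is improper at $t=0$ (the integrand behaves like $-\log^{2}(t)$, which is integrable), and your appeal to $Li_{p}(t)\sim t$ as $t\to 0^{+}$ legitimately kills the boundary terms $\log^{2}(t)\,Li_{2}(t)$ and $\log(t)\,Li_{3}(t)$; strictly one should perform the integrations by parts on $[\varepsilon,x]$ and let $\varepsilon\to 0^{+}$, which is exactly what your limit argument amounts to. No gaps.
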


\begin{Lem}(\cite{Doelder,Lewin})\label{lem000}
Let $0 \leq x \leq 1$, then we have
\begin{align}
\int_{0}^{x}\frac{\log^{2}(1-t)}{t}\mathrm{d}t
&=\log(x)\log^{2}(1-x)+2\log(1-x)Li_{2}(1-x)\notag\\
&\quad -2Li_{3}(1-x)+2Li_{3}(1)\,,\label{Lewin1}\\
\int_{0}^{x}\frac{\log^{2}(1+t)}{t}\mathrm{d}t
&=\log(x)\log^{2}(1+x)-\frac{2}{3}\log^{3}(1+x)-2Li_{3}\bigg(\frac{1}{1+x}\bigg)\notag\\
&\quad -2\log(1+x)Li_{2}\bigg(\frac{1}{1+x}\bigg)+2Li_{3}(1)\,.\label{Lewin2}
\end{align}
\end{Lem}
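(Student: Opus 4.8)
The plan is to prove both identities by a change of variables that turns each integrand into a rational multiple of $\log^{2}u$, followed by two integrations by parts using the elementary antiderivative relations $\frac{\mathrm{d}}{\mathrm{d}u}Li_{2}(u)=-\frac{\log(1-u)}{u}$ and $\frac{\mathrm{d}}{\mathrm{d}u}Li_{3}(u)=\frac{Li_{2}(u)}{u}$, both of which follow directly from the series definition of the polylogarithm. The engine behind the two evaluations is a single ``master'' computation,
\[
\int_{a}^{1}\frac{\log^{2}u}{1-u}\,\mathrm{d}u
=\log^{2}a\,\log(1-a)+2\log a\,Li_{2}(a)-2Li_{3}(a)+2Li_{3}(1),
\]
valid for $0<a\le 1$, which I would establish once and then specialize twice.

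To obtain this master formula I would integrate by parts with $\mathrm{d}v=\frac{\mathrm{d}u}{1-u}$ and $v=-\log(1-u)$. The boundary term at $u=1$ vanishes because $\log^{2}u\sim(u-1)^{2}$ overwhelms the logarithmic blow-up of $\log(1-u)$, leaving $\log^{2}a\,\log(1-a)$ together with the new integral $2\int_{a}^{1}\frac{\log u\,\log(1-u)}{u}\,\mathrm{d}u$. A second integration by parts on the latter, writing $\frac{\log(1-u)}{u}\,\mathrm{d}u=-\mathrm{d}Li_{2}(u)$ and then recognizing $\frac{Li_{2}(u)}{u}\,\mathrm{d}u=\mathrm{d}Li_{3}(u)$, produces exactly $2\log a\,Li_{2}(a)-2Li_{3}(a)+2Li_{3}(1)$, which completes the master identity.

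For \eqref{Lewin1} I would substitute $u=1-t$, which sends $\int_{0}^{x}\frac{\log^{2}(1-t)}{t}\,\mathrm{d}t$ to $\int_{1-x}^{1}\frac{\log^{2}u}{1-u}\,\mathrm{d}u$; this is the master integral with $a=1-x$, and inserting $\log(1-a)=\log x$, $Li_{2}(a)=Li_{2}(1-x)$, $Li_{3}(a)=Li_{3}(1-x)$ yields the stated result at once. For \eqref{Lewin2} I would instead substitute $u=1/(1+t)$, under which $\log(1+t)=-\log u$ and $\frac{\mathrm{d}t}{t}=-\frac{\mathrm{d}u}{u(1-u)}$, so the integral becomes $\int_{1/(1+x)}^{1}\frac{\log^{2}u}{u(1-u)}\,\mathrm{d}u$. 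Splitting by partial fractions $\frac{1}{u(1-u)}=\frac{1}{u}+\frac{1}{1-u}$, the first piece integrates elementarily to $\frac{1}{3}\log^{3}(1+x)$, while the second piece is the master integral at $a=1/(1+x)$; combining the two and collecting the $\log^{3}(1+x)$ terms yields \eqref{Lewin2}.

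The only genuinely delicate points are the justification that the integrated terms vanish at the singular endpoints (an $O\big((u-1)^{2}\log(1-u)\big)$ estimate at $u=1$ and an $O(t)$ estimate at $t=0$), and the bookkeeping needed to match the polylogarithm arguments after each substitution, in particular tracking $\log(1-a)=\log x-\log(1+x)$ in the second identity so that the spurious cubic-logarithm terms combine into the single coefficient $-\tfrac{2}{3}$. Since both evaluations already appear in \cite{Doelder,Lewin}, the argument above is really a self-contained rederivation rather than a new result.
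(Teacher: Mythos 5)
Your proposal is correct: the master formula $\int_{a}^{1}\frac{\log^{2}u}{1-u}\,\mathrm{d}u=\log^{2}a\,\log(1-a)+2\log a\,Li_{2}(a)-2Li_{3}(a)+2Li_{3}(1)$ checks out (differentiating the right-hand side in $a$ gives $-\frac{\log^{2}a}{1-a}$ and both sides vanish at $a=1$), the substitutions $u=1-t$ and $u=1/(1+t)$ are carried out with the right Jacobians and limits, and the partial-fraction split plus the specializations $a=1-x$ and $a=1/(1+x)$ reproduce \eqref{Lewin1} and \eqref{Lewin2} exactly, including the $-\frac{2}{3}\log^{3}(1+x)$ coefficient. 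Note, however, that the paper offers no proof of this lemma at all: it is quoted verbatim from \cite{Doelder,Lewin}, so there is no in-paper argument to match yours against. What your derivation adds is a genuine unification — both identities fall out of a single integral evaluated by two integrations by parts, rather than being treated as separate entries from Lewin's tables — and your substitution $u=1/(1+t)$ is precisely the inverse of the change of variable $t=\frac{1}{u}-1$ that the paper does use later, in the proof of Lemma \ref{lem0000}; so your route is stylistically consistent with the paper's own methods while making the cited lemma self-contained. The endpoint cases ($a\to 0$ when $x=1$ in \eqref{Lewin1}, where $\log^{2}a\,\log(1-a)$ and $\log a\,Li_{2}(a)$ tend to $0$, recovering $\int_{0}^{1}\frac{\log^{2}u}{1-u}\,\mathrm{d}u=2\zeta(3)$) behave as you indicate, so the limiting interpretation at $x=0,1$ is the only caveat, and you flag it correctly.
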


De Doelder \cite{Doelder} only calculated the integral $\int_{0}^{1}\frac{\log^{2}(t)\log(1+t)}{1+t}\mathrm{d}t$, we now give explicit expression for the intagral $\int_{0}^{x}\frac{\log^{2}(t)\log(1+t)}{1+t}\mathrm{d}t$.
\begin{Lem}\label{lem0000}
Let $0 \leq x \leq 1$, then we have
\begin{align*}
&\quad \int_{0}^{x}\frac{\log^{2}(t)\log(1+t)}{1+t}\mathrm{d}t\\
&=2 \bigg(Li_{4}(-x)+Li_{4}\bigg(\frac{x}{1+x}\bigg)+Li_{4}\bigg(\frac{1}{1+x}\bigg)-Li_{4}(1)
-Li_{3}(1)\log(x)\\
&\quad +Li_{3}\bigg(\frac{x}{1+x}\bigg)\log(1+x)+Li_{3}\bigg(\frac{1}{1+x}\bigg)\log(1+x)
+Li_{3}\bigg(\frac{1}{1+x}\bigg)\log(x)\\
&\quad +Li_{2}\bigg(\frac{1}{1+x}\bigg)\log(x)\log(1+x)\bigg)+\frac{1}{6}\pi^{2}\log^{2}(1+x)
-\frac{1}{2}\log^{2}(x)\log^{2}(1+x)\\
&\quad +\frac{4}{3}\log(x)\log^{3}(1+x)-\frac{1}{2}\log^{4}(1+x)\,.
\end{align*}
\end{Lem}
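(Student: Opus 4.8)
The plan is to reduce this ``$1+t$'' integral to the ``$1-t$'' integrals already evaluated in Lemmas \ref{lem0} and \ref{lem00} by means of the Landen-type substitution $s=\frac{t}{1+t}$. Under it one has $t=\frac{s}{1-s}$, $1+t=\frac{1}{1-s}$, $\frac{\mathrm{d}t}{1+t}=\frac{\mathrm{d}s}{1-s}$, $\log(1+t)=-\log(1-s)$ and $\log(t)=\log(s)-\log(1-s)$, while the upper limit becomes $y:=\frac{x}{1+x}$ (so $1-y=\frac{1}{1+x}$). Expanding $\log^{2}(t)=(\log(s)-\log(1-s))^{2}$ and multiplying through, I would obtain
\[
\int_0^x\frac{\log^{2}(t)\log(1+t)}{1+t}\,\mathrm{d}t
=-\int_0^y\frac{\log^{2}(s)\log(1-s)}{1-s}\,\mathrm{d}s
+2\int_0^y\frac{\log(s)\log^{2}(1-s)}{1-s}\,\mathrm{d}s
-\int_0^y\frac{\log^{3}(1-s)}{1-s}\,\mathrm{d}s .
\]

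The first integral is exactly Lemma \ref{lem0} evaluated at $y$, and the crucial point is that its polylogarithmic arguments then become precisely those in the target: since $y=\frac{x}{1+x}$, $1-y=\frac{1}{1+x}$ and $-\frac{y}{1-y}=-x$, the terms $Li_4(y)$, $Li_4(1-y)$ and $Li_4\big(-\frac{y}{1-y}\big)$ turn into $Li_4\big(\frac{x}{1+x}\big)$, $Li_4\big(\frac{1}{1+x}\big)$ and $Li_4(-x)$. The third integral is elementary, equal to $\frac14\log^{4}(1-y)=\frac14\log^{4}\big(\frac{1}{1+x}\big)$. The only genuinely new work is the middle integral $K:=\int_0^y\frac{\log(s)\log^{2}(1-s)}{1-s}\,\mathrm{d}s$; here I would integrate by parts with $v=-\frac13\log^{3}(1-s)$ to get $K=-\frac13\log(y)\log^{3}(1-y)+\frac13\,L$, where $L:=\int_0^y\frac{\log^{3}(1-s)}{s}\,\mathrm{d}s$.

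To evaluate $L$ independently, substitute $w=1-s$, giving $L=\int_{1-y}^{1}\frac{\log^{3}(w)}{1-w}\,\mathrm{d}w$, and perform one integration by parts using $-\log(1-w)$ as antiderivative of $\frac{1}{1-w}$; this converts the remaining integral into $3\int_{1-y}^{1}\frac{\log^{2}(w)\log(1-w)}{w}\,\mathrm{d}w$, whose antiderivative is furnished verbatim by Lemma \ref{lem00}. Some care is needed at $w=1$, but the boundary product $\log^{3}(w)\log(1-w)$ vanishes there and only $-2Li_4(1)$ survives, which together with the endpoint $w=1-y=\frac{1}{1+x}$ accounts for the $Li_4(1)$, $Li_4\big(\frac{1}{1+x}\big)$ and $Li_3(1)\log(x)$ contributions of the stated formula. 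A quick coefficient check is reassuring: the negated Lemma \ref{lem0} supplies $+2Li_4\big(\frac{x}{1+x}\big)+2Li_4(-x)-2Li_4\big(\frac{1}{1+x}\big)+2Li_4(1)$, while $\frac23L$ supplies $+4Li_4\big(\frac{1}{1+x}\big)-4Li_4(1)$, so the $Li_4$ weights collapse to exactly $2,2,2,-2$ at the four arguments, matching the leading line of the claim.

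Assembling the three pieces and rewriting $y,1-y$ back in terms of $x$ then produces an expression in $Li_4(-x)$, $Li_4\big(\frac{x}{1+x}\big)$, $Li_4\big(\frac{1}{1+x}\big)$, various $Li_3$ and $Li_2$ at these arguments, and powers of $\log(x)$ and $\log(1+x)$. The main obstacle I anticipate is not any individual integral but the final consolidation: Lemma \ref{lem0} and the evaluation of $L$ each emit several $Li_3$, $Li_2$ and log-power terms, and compressing them into the compact stated form requires the dilogarithm and trilogarithm functional equations (reflection and Landen identities relating the arguments $\frac{x}{1+x}$, $\frac{1}{1+x}$ and $-x$) together with disciplined bookkeeping of the $\frac16\pi^{2}\log^{2}(1+x)$, $\frac{4}{3}\log(x)\log^{3}(1+x)$ and $\log^{4}(1+x)$ contributions.
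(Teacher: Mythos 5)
Your proposal is correct and takes essentially the same route as the paper: the paper's substitution $t=\frac{1}{u}-1$ (i.e.\ $u=\frac{1}{1+t}$) is your $s=\frac{t}{1+t}$ composed with the reflection $s=1-u$, and both yield the identical three-term decomposition resolved by Lemmata \ref{lem0} and \ref{lem00} plus the elementary $\log$-power integral, your detour through $K$ and $L$ merely re-deriving by two integrations by parts the direct application of Lemma \ref{lem00} that the reflection already provides. One harmless bookkeeping slip: the $Li_{3}(1)\log(x)$ contribution actually arises from the term $2Li_{3}(1)\big(\log(y)-\log(1-y)\big)=2Li_{3}(1)\log(x)$ in Lemma \ref{lem0}, not from the evaluation of $L$.
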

\begin{proof}
Following De Doelder's paper \cite{Doelder}, we make the substitution $t=\frac{1}{u}-1$ and it yields that
\begin{align*}
&\quad \int_{0}^{x}\frac{\log^{2}(t)\log(1+t)}{1+t}\mathrm{d}t\\
&=-\int_{\frac{1}{1+x}}^{1}\frac{\big(\log(1-u)-\log(u)\big)^{2}\log(u)}{u}\mathrm{d}u\\
&=-\int_{\frac{1}{1+x}}^{1}\frac{\log^{2}(1-u)\log(u)}{u}\mathrm{d}u
+2\int_{\frac{1}{1+x}}^{1}\frac{\log(1-u)\log^{2}(u)}{u}\mathrm{d}u\\
&\quad -\int_{\frac{1}{1+x}}^{1}\frac{\log^{3}(u)}{u}\mathrm{d}u\\
&=-\int_{0}^{\frac{x}{1+x}}\frac{\log^{2}(y)\log(1-y)}{1-y}\mathrm{d}y
+2\int_{\frac{1}{1+x}}^{1}\frac{\log(1-u)\log^{2}(u)}{u}\mathrm{d}u\\
&\quad +\frac{1}{4}\log^{4}(1+x)\,.
\end{align*}
With the help of Lemmata \ref{lem0} and \ref{lem00}, we get the desired result.
\end{proof}

\begin{theorem}
Let $0 \leq x \leq 1$, then we have
\begin{align*}
&\quad \sum_{n=1}^\infty \frac{H_{n}}{n^{3}}x^{n}\\
&=2Li_{4}(x)+Li_{4}\bigg(\frac{-x}{1-x}\bigg)-Li_{4}(1-x)+Li_{4}(1)-Li_{3}(x)\log(1-x)\\
&\quad +Li_{3}(1)\log(1-x)+\frac{1}{24}\log^{4}(1-x)-\frac{1}{6}\log(x)\log^{3}(1-x)\\
&\quad +\frac{1}{12}\pi^{2}\log^{2}(1-x)\,,\\
&\quad \sum_{n=1}^\infty \frac{H_{n}}{n^{3}}(-x)^{n}\\
&=2Li_{4}(-x)+Li_{4}\bigg(\frac{1}{1+x}\bigg)+Li_{4}\bigg(\frac{x}{1+x}\bigg)-Li_{4}(1)\\
&\quad +\log(1+x)Li_{3}\bigg(\frac{1}{1+x}\bigg)+\log(1+x)Li_{3}\bigg(\frac{x}{1+x}\bigg)\\
&\quad +\frac{1}{12}\pi^{2}\log^{2}(1+x)+\frac{1}{3}\log(x)\log^{3}(1+x)-\frac{1}{4}\log^{4}(1+x)\,.
\end{align*}
\end{theorem}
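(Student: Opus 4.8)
The plan is to relate each target series to the integral evaluated in Lemma \ref{lem0} (respectively Lemma \ref{lem0000}) by expanding that integral as a power series and matching it against the lower-weight series already recorded by De Doelder in the introduction. The starting point is the generating function $\sum_{n=1}^\infty H_n t^n = -\log(1-t)/(1-t) = Li_1(t)/(1-t)$ used in the proof of Theorem \ref{thm1}. Since $\log(1-t)/(1-t) = -\sum_{n=1}^\infty H_n t^n$, multiplying by $\log^2(t)$ and integrating termwise over $[0,x]$ turns the integrand of Lemma \ref{lem0} into a sum of elementary integrals
\begin{align*}
\int_0^x t^n \log^2(t)\,\mathrm{d}t = \frac{x^{n+1}}{n+1}\log^2(x) - \frac{2x^{n+1}}{(n+1)^2}\log(x) + \frac{2x^{n+1}}{(n+1)^3},
\end{align*}
where the contribution of the lower limit vanishes because $t^{n+1}\log^k(t) \to 0$ as $t \to 0^+$.

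After reindexing $n+1 \mapsto n$ and using $H_0 = 0$, this yields
\begin{align*}
\int_0^x \frac{\log^2(t)\log(1-t)}{1-t}\,\mathrm{d}t
&= -\log^2(x) \sum_{n=1}^\infty \frac{H_{n-1}}{n}x^n + 2\log(x) \sum_{n=1}^\infty \frac{H_{n-1}}{n^2}x^n\\
&\quad - 2\sum_{n=1}^\infty \frac{H_{n-1}}{n^3}x^n.
\end{align*}
Here the first two series are exactly the ones De Doelder evaluates in the introduction, namely $\tfrac12\log^2(1-x)$ and $\tfrac12\log(x)\log^2(1-x) + \log(1-x)Li_2(1-x) - Li_3(1-x) + Li_3(1)$, while the left-hand side is supplied by Lemma \ref{lem0}. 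I would substitute all three closed forms, solve the resulting linear equation for $\sum_{n=1}^\infty \frac{H_{n-1}}{n^3}x^n$, and finally pass to $\sum_{n=1}^\infty \frac{H_n}{n^3}x^n$ by adding $Li_4(x)$, using $H_n = H_{n-1} + 1/n$.

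The second (alternating) identity follows the same route with $x$ replaced by $-x$: from $\log(1+t)/(1+t) = \sum_{n=1}^\infty (-1)^{n-1}H_n t^n$, multiplying by $\log^2(t)$ and integrating expresses the integral of Lemma \ref{lem0000} as $\log^2(x) \sum \frac{H_{n-1}}{n}(-x)^n - 2\log(x) \sum \frac{H_{n-1}}{n^2}(-x)^n + 2\sum \frac{H_{n-1}}{n^3}(-x)^n$. The first two sums are again known from the introduction (the second being De Doelder's alternating evaluation of $\sum (-1)^n H_{n-1}n^{-2} x^n$), so solving for the last sum and adding $Li_4(-x)$ gives the claim.

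The hard part is the final simplification rather than the setup. When the closed forms are inserted, the expression for $\sum \frac{H_{n-1}}{n^3}x^n$ initially carries every transcendental term produced by Lemma \ref{lem0}, in particular $\log(x)\log(1-x)Li_2(1-x)$, $\log(x)Li_3(1-x)$, and $\log(x)Li_3(1)$. These are precisely the terms that must cancel against the contributions of the $2\log(x)\sum \frac{H_{n-1}}{n^2}x^n$ and $-\log^2(x)\sum \frac{H_{n-1}}{n}x^n$ pieces; verifying that they do cancel, and that the surviving weight-four terms assemble into the stated combination $2Li_4(x) + Li_4\bigl(\tfrac{-x}{1-x}\bigr) - Li_4(1-x) + Li_4(1)$, is the one genuinely delicate bookkeeping step. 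For the alternating identity the same cancellation must be tracked, but the polylogarithms now carry the arguments $1/(1+x)$ and $x/(1+x)$ coming from Lemma \ref{lem0000}, so I would keep those arguments fixed throughout and collect like terms only at the very end.
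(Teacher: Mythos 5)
Your proposal is correct, and I verified the bookkeeping you deferred: after substituting Lemma \ref{lem0} together with De Doelder's evaluations of $\sum_{n\ge 1}H_{n-1}x^{n}/n$ and $\sum_{n\ge 1}H_{n-1}x^{n}/n^{2}$ into your termwise expansion, the terms $\log(x)Li_{3}(1-x)$, $\log(x)\log(1-x)Li_{2}(1-x)$, $\log(x)Li_{3}(1)$ and $\log^{2}(x)\log^{2}(1-x)$ cancel exactly as you predict, and the survivors assemble into the stated weight-four expression; the alternating case closes the same way from Lemma \ref{lem0000} and De Doelder's evaluation of $\sum_{n\ge1}(-1)^{n}H_{n-1}x^{n}/n^{2}$.

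Your route is genuinely different in organization from the paper's, though it consumes equivalent inputs. The paper goes forward: it represents $\sum_{n\ge1}H_{n}x^{n+1}/(n+1)^{3}$ as the iterated integral $\frac{1}{2}\int_{0}^{x}\frac{\mathrm{d}u}{u}\int_{0}^{u}\frac{\log^{2}(1-t)}{t}\mathrm{d}t$, integrates by parts twice to reach $\frac{1}{2}\log(x)\int_{0}^{x}\frac{\log^{2}(1-t)}{t}\mathrm{d}t-\frac{1}{4}\log^{2}(x)\log^{2}(1-x)-\frac{1}{2}\int_{0}^{x}\frac{\log^{2}(t)\log(1-t)}{1-t}\mathrm{d}t$, and then quotes Lemma \ref{lem000} for the first integral and Lemma \ref{lem0} for the last (Lemmata \ref{lem000} and \ref{lem0000} in the alternating case). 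You run the same computation backwards: termwise integration of $t^{n}\log^{2}(t)$ compresses the paper's two integrations by parts into a single expansion of the Lemma \ref{lem0} integral, after which the unknown series is isolated by solving a linear relation. The inputs match because De Doelder's formula for $\sum_{n\ge1}H_{n-1}x^{n}/n^{2}$ is exactly one half of the right-hand side of (\ref{Lewin1}), i.e.\ Lemma \ref{lem000} in series form, and $\sum_{n\ge1}H_{n-1}x^{n}/n=\frac{1}{2}\log^{2}(1-x)$ is the first integration of the generating function. What your version buys is transparency: the cancellation pattern is visible from the start, and Lemma \ref{lem000} never needs to be invoked explicitly. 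What the paper's version buys is a uniform template --- generating function, iterated integration, integration by parts --- that it reuses verbatim for the $H_{n}^{(2)}$ theorems. In a final write-up you should make explicit the justification for termwise integration (uniform convergence on $[0,x]$ for $x<1$, with the endpoint $x=1$ handled by an Abel-limit argument), which is the same implicit step the paper takes.
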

\begin{proof}
Integrating the generating function of $H_{n}$, we can write
\begin{align*}
&\quad \sum_{n=1}^\infty \frac{H_{n}}{(n+1)^{3}}x^{n+1}\\
&=\frac{1}{2}\int_{0}^{x}\frac{\mathrm{d}u}{u}\int_{0}^{u}\frac{\log^{2}(1-t)}{t}\mathrm{d}t\\
&=\frac{1}{2}\log(x)\int_{0}^{x}\frac{\log^{2}(1-t)}{t}\mathrm{d}t
-\frac{1}{2}\int_{0}^{x}\frac{\log(t)\log^{2}(1-t)}{t}\mathrm{d}t\\
&=\frac{1}{2}\log(x)\int_{0}^{x}\frac{\log^{2}(1-t)}{t}\mathrm{d}t
-\frac{1}{4}\log^{2}(x)\log^{2}(1-x)-\frac{1}{2}\int_{0}^{x}\frac{\log^{2}(t)\log(1-t)}{1-t}\mathrm{d}t\,,\\
&\quad \sum_{n=1}^\infty \frac{H_{n}}{(n+1)^{3}}(-x)^{n+1}\\
&=\frac{1}{2}\int_{0}^{x}\frac{\mathrm{d}u}{u}\int_{0}^{u}\frac{\log^{2}(1+t)}{t}\mathrm{d}t\\
&=\frac{1}{2}\log(x)\int_{0}^{x}\frac{\log^{2}(1+t)}{t}\mathrm{d}t
-\frac{1}{4}\log^{2}(x)\log^{2}(1+x)+\frac{1}{2}\int_{0}^{x}\frac{\log^{2}(t)\log(1+t)}{1+t}\mathrm{d}t\,.
\end{align*}
Note that
$$
\sum_{n=1}^\infty \frac{H_{n}}{n^{3}}x^{n}
=\sum_{n=1}^\infty \frac{H_{n}}{(n+1)^{3}}x^{n+1}+\sum_{n=1}^\infty \frac{x^{n}}{n^{4}}\,,
$$
and
$$
\sum_{n=1}^\infty \frac{H_{n}}{n^{3}}(-x)^{n}
=\sum_{n=1}^\infty \frac{H_{n}}{(n+1)^{3}}(-x)^{n+1}+\sum_{n=1}^\infty \frac{(-x)^{n}}{n^{4}}\,,
$$
with the help of Lemmata \ref{lem0}, \ref{lem000} and \ref{lem0000}, we get the desired result.
\end{proof}

\begin{theorem}\label{thm2}
Let $0 \leq x \leq 1$, then we have
\begin{align*}
&\quad \sum_{n=1}^\infty \frac{H_{n}^{(2)}}{n}x^{n}\\
&=-Li_{2}(x)\log(1-x)-\log(x)\log^{2}(1-x)-2\log(1-x)Li_{2}(1-x)\\
&\quad +2Li_{3}(1-x)-2Li_{3}(1)+Li_{3}(x)\,,\\
&\quad \sum_{n=1}^\infty \frac{H_{n}^{(2)}}{n}(-x)^{n}\\
&=-Li_{2}(-x)\log(1+x)-\log(x)\log^{2}(1+x)+\frac{2}{3}\log^{3}(1+x)\\
&\quad +2\log(1+x)Li_{2}\bigg(\frac{1}{1+x}\bigg)+2Li_{3}\bigg(\frac{1}{1+x}\bigg)-2Li_{3}(1)+Li_{3}(-x)\,.
\end{align*}
\end{theorem}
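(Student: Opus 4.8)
The plan is to integrate the generating function of $H_n^{(2)}$ and reduce everything to the Lewin integrals already recorded in Lemma \ref{lem000}. Since $H_n^{(2)}$ is a partial sum, its ordinary generating function is $\sum_{n=1}^\infty H_n^{(2)}t^n = Li_2(t)/(1-t)$, and integrating term by term yields
\[
\sum_{n=1}^\infty \frac{H_n^{(2)}}{n+1}x^{n+1}=\int_0^x \frac{Li_2(t)}{1-t}\,\mathrm{d}t .
\]
Exactly as in the proof of Theorem \ref{thm1}, I would use $H_n^{(2)}=H_{n-1}^{(2)}+n^{-2}$ to write
\[
\sum_{n=1}^\infty \frac{H_n^{(2)}}{n}x^n=\int_0^x \frac{Li_2(t)}{1-t}\,\mathrm{d}t+Li_3(x),
\]
so that the entire problem reduces to evaluating the single integral $\int_0^x Li_2(t)/(1-t)\,\mathrm{d}t$.

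The key step is integration by parts on that integral. Taking $u=Li_2(t)$ and $\mathrm{d}v=\mathrm{d}t/(1-t)$, and using $Li_2'(t)=-\log(1-t)/t$ together with $v=-\log(1-t)$, the boundary term vanishes at the lower limit ($Li_2(0)=0$) and I obtain
\[
\int_0^x \frac{Li_2(t)}{1-t}\,\mathrm{d}t=-Li_2(x)\log(1-x)-\int_0^x \frac{\log^2(1-t)}{t}\,\mathrm{d}t .
\]
The remaining integral is precisely the one evaluated in equation \eqref{Lewin1} of Lemma \ref{lem000}. Substituting that evaluation and adding $Li_3(x)$ produces the first claimed formula directly.

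For the alternating series I would again write $\sum_{n=1}^\infty \frac{H_n^{(2)}}{n}(-x)^n=\int_0^{-x}\frac{Li_2(t)}{1-t}\,\mathrm{d}t+Li_3(-x)$, and then apply the substitution $t=-s$, which converts the integral into $-\int_0^x \frac{Li_2(-s)}{1+s}\,\mathrm{d}s$. The same integration by parts, now using $\frac{\mathrm{d}}{\mathrm{d}s}Li_2(-s)=-\log(1+s)/s$ and $v=\log(1+s)$, reduces the nontrivial part to $\int_0^x \frac{\log^2(1+s)}{s}\,\mathrm{d}s$, which is exactly equation \eqref{Lewin2} of Lemma \ref{lem000}. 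Assembling the pieces gives the second formula.

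Once the two Lewin integrals are taken as known, the computation is essentially routine; the only genuine content is the integration-by-parts reduction that turns $\int Li_2/(1\pm t)$ into the known $\int \log^2(1\pm t)/t$. I expect the one point demanding care to be the sign and argument bookkeeping in the alternating case, namely ensuring that the dilogarithm and trilogarithm arguments emerge as $1/(1+x)$ rather than as $-x$; this is handled automatically by invoking \eqref{Lewin2} in place of \eqref{Lewin1}, together with checking that the boundary terms vanish at $t=0$ and that convergence is unproblematic up to $x=1$.
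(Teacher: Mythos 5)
Your proposal is correct and follows essentially the same route as the paper: integrate the generating function $Li_{2}(t)/(1-t)$ term by term, integrate by parts to reduce $\int_{0}^{\pm x}Li_{2}(t)/(1-t)\,\mathrm{d}t$ to $\int_{0}^{x}\log^{2}(1\pm t)/t\,\mathrm{d}t$, and invoke \eqref{Lewin1} and \eqref{Lewin2} of Lemma \ref{lem000}. The only difference is cosmetic --- you spell out the index shift $H_{n}^{(2)}=H_{n-1}^{(2)}+n^{-2}$ contributing the $Li_{3}(\pm x)$ terms, which the paper leaves implicit in this proof.
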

\begin{proof}
Integrating the generating function of $H_{n}^{(2)}$, we can write
\begin{align*}
&\sum_{n=1}^\infty \frac{H_{n}^{(2)}}{n+1}x^{n+1}
=-\log(1-x)Li_{2}(x)-\int_{0}^{x}\frac{\log^{2}(1-t)}{t}\mathrm{d}t\,,\\
&\sum_{n=1}^\infty \frac{H_{n}^{(2)}}{n+1}(-x)^{n+1}
=-\log(1+x)Li_{2}(-x)-\int_{0}^{x}\frac{\log^{2}(1+t)}{t}\mathrm{d}t\,.
\end{align*}
With the help of Lemma \ref{lem000}, we get the desired result.
\end{proof}

\begin{theorem}
Let $0 \leq x \leq 1$, then we have
\begin{align*}
&\quad \sum_{n=1}^\infty \frac{H_{n}^{(2)}}{n^{2}}x^{n}\\
&=-Li_{4}(x)-2 Li_{4}\bigg(\frac{-x}{1-x}\bigg)+2 Li_{4}(1-x)-2 Li_{4}(1)
+2 Li_{3}(x)\log(1-x)\\
&\quad -2 Li_{3}(1)\log(1-x)+\frac{1}{2}Li_{2}(x)^{2}-\frac{1}{6}\pi^{2}\log^{2}(1-x)\\
&\quad +\frac{1}{3}\log(x)\log^{3}(1-x)-\frac{1}{12}\log^{4}(1-x)\,,\\
&\quad \sum_{n=1}^\infty \frac{H_{n}^{(2)}}{n^{2}}(-x)^{n}\\
&=-Li_{4}(-x)-2 Li_{4}\bigg(\frac{x}{1+x}\bigg)-2 Li_{4}\bigg(\frac{1}{1+x}\bigg)+2 Li_{4}(1)\\
&\quad -2 Li_{3}\bigg(\frac{x}{1+x}\bigg)\log(1+x)-2 Li_{3}\bigg(\frac{1}{1+x}\bigg)\log(1+x)+\frac{1}{2}Li_{2}(-x)^{2}\\
&\quad -\frac{1}{6}\pi^{2}\log^{2}(1+x)
-\frac{2}{3}\log(x)\log^{3}(1+x)+\frac{1}{2}\log^{4}(1+x)\,.
\end{align*}
\end{theorem}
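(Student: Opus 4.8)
The plan is to bypass the double iterated integral entirely and instead reduce both sums to the already-established evaluation of $\sum_n H_n/n^3\,x^n$ from the preceding theorem, using one elementary symmetric-function identity. The engine is the partial-fraction decomposition of $1/(a^2b^2)$ subject to $a+b=n$. Since $\frac1a+\frac1b=\frac{n}{ab}$, one has $\frac1{ab}=\frac1n(\frac1a+\frac1b)$, hence
\[
\frac{1}{a^2b^2}=\frac{1}{n^2}\Big(\frac{1}{a^2}+\frac{1}{b^2}\Big)+\frac{2}{n^3}\Big(\frac{1}{a}+\frac{1}{b}\Big).
\]
Summing over $a=1,\dots,n-1$ with $b=n-a$ gives $\sum_{a=1}^{n-1}\frac1{a^2(n-a)^2}=\frac{2}{n^2}H_{n-1}^{(2)}+\frac{4}{n^3}H_{n-1}$, and substituting $H_{n-1}^{(2)}=H_n^{(2)}-n^{-2}$ and $H_{n-1}=H_n-n^{-1}$ rearranges this into the finite identity
\[
\frac{H_n^{(2)}}{n^2}+\frac{2H_n}{n^3}=\frac12\sum_{a=1}^{n-1}\frac{1}{a^2(n-a)^2}+\frac{3}{n^4}.
\]

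Next I would multiply by $x^n$ and sum over $n\ge 1$. The Cauchy product identifies $\sum_{n}\big(\sum_{a+b=n}\frac1{a^2b^2}\big)x^n=Li_2(x)^2$, while the last term contributes $3\,Li_4(x)$, producing the generating-function relation
\[
\sum_{n=1}^\infty \frac{H_n^{(2)}}{n^2}x^n=\frac12 Li_2(x)^2+3\,Li_4(x)-2\sum_{n=1}^\infty\frac{H_n}{n^3}x^n \qquad(|x|\le 1).
\]
Substituting the formula for $\sum_n H_n/n^3\,x^n$ from the preceding theorem and collecting terms yields the first claimed evaluation; in particular the $-2\,Li_4(\frac{-x}{1-x})+2\,Li_4(1-x)$, the $\log^4(1-x)$ and $\log(x)\log^3(1-x)$ pieces come straight from that expression, and the genuinely new ingredient is the $\tfrac12 Li_2(x)^2$ term. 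For the alternating series I would run the identical identity with $x$ replaced by $-x$, so that $Li_2(x)^2\mapsto Li_2(-x)^2$, $Li_4(x)\mapsto Li_4(-x)$, and the $H_n/n^3$ sum becomes its companion $\sum_n H_n/n^3(-x)^n$, also supplied by the preceding theorem; the three arguments $\frac1{1+x}$, $\frac{x}{1+x}$ and $-x$ then appear automatically.

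The individual steps are routine, so the main obstacle is bookkeeping rather than analysis. The index shift between $H_{n-1}$ and $H_n$ — equivalently the exact coefficient of the $Li_4(x)$ correction — must be tracked precisely, since an error there corrupts the constant in front of $Li_4(x)$ in the final closed form. As a cross-check I would also keep the direct integration route in reserve: from the generating function $\sum_n H_n^{(2)}x^n=Li_2(x)/(1-x)$ one gets $\sum_n \frac{H_n^{(2)}}{n^2}x^n=\int_0^x\frac{du}{u}\int_0^u\frac{Li_2(t)}{1-t}\,dt+Li_4(x)$, and integration by parts splits off $-\int_0^x\frac{Li_2(u)\log(1-u)}{u}\,du=\tfrac12 Li_2(x)^2$ while reducing the remainder to the double integral $\int_0^x\frac{du}{u}\int_0^u\frac{\log^2(1-t)}{t}\,dt$ already evaluated for the $H_n/n^3$ theorem (with Lemma~\ref{lem0} furnishing the $Li_4$ terms). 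This alternative lands on the same relation, and its only delicate point is the cancellation of the spurious $\log(x)$ contributions, which confirms the symmetric-sum computation.
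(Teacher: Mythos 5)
Your argument is correct, and it takes a genuinely different route from the paper. The paper proves this theorem analytically: it writes $\sum_{n}\frac{H_n^{(2)}}{(n+1)^2}x^{n+1}$ as the iterated integral $\int_0^x\frac{\mathrm{d}u}{u}\int_0^u\frac{Li_2(t)}{1-t}\mathrm{d}t$, integrates by parts to isolate $\frac{1}{2}Li_2(x)^2$ and reduce the rest to $\int_0^x\frac{\log(t)\log^2(1\mp t)}{t}\mathrm{d}t$, and then invokes Lewin's evaluations (Lemmata \ref{lem0} and \ref{lem0000}) together with Theorem \ref{thm2}; your ``reserve'' cross-check is in essence exactly this proof. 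Your primary route is instead algebraic: I checked the partial-fraction step, which does give $\frac{1}{a^2b^2}=\frac{1}{n^2}\big(\frac{1}{a^2}+\frac{1}{b^2}\big)+\frac{2}{n^3}\big(\frac{1}{a}+\frac{1}{b}\big)$ for $a+b=n$, hence the finite identity $\frac{H_n^{(2)}}{n^2}+\frac{2H_n}{n^3}=\frac{1}{2}\sum_{a=1}^{n-1}\frac{1}{a^2(n-a)^2}+\frac{3}{n^4}$, and via the Cauchy product the relation
\[
\sum_{n=1}^\infty \frac{H_n^{(2)}}{n^2}x^n=\frac{1}{2}Li_2(x)^2+3\,Li_4(x)-2\sum_{n=1}^\infty\frac{H_n}{n^3}x^n\qquad(\lvert x\rvert\le 1)\,,
\]
valid also with $x\mapsto -x$ since the underlying identity is termwise. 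Substituting the closed forms of the preceding theorem reproduces both stated evaluations exactly (the leading coefficient $3-4=-1$ in front of $Li_4(\pm x)$ comes out right, and at $x=1$ the relation recovers the classical $\sum_n H_n^{(2)}/n^2=\frac{7}{4}\zeta(4)$, a useful sanity check). What your approach buys: no new polylogarithmic integrals are needed beyond those already consumed by the $\sum_n H_n/n^3\,x^n$ theorem, and it explains structurally why the two closed forms share the same $Li_4$ arguments --- the $H^{(2)}$ sum is just an algebraic shadow of the $H$ sum plus $\frac{1}{2}Li_2^2$. What the paper's route buys: it is logically independent of the $H_n/n^3$ theorem (depending instead on Theorem \ref{thm2} and the quoted lemmas), so the two theorems corroborate one another rather than one resting on the other.
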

\begin{proof}
Integrating the generating function of $H_{n}^{(2)}$, we can write
\begin{align*}
&\quad \sum_{n=1}^\infty \frac{H_{n}^{(2)}}{(n+1)^{2}}x^{n+1}\\
&=\int_{0}^{x}\frac{\mathrm{d}u}{u}\int_{0}^{u}\frac{Li_{2}(t)}{1-t}\mathrm{d}t\\
&=\log(x)\int_{0}^{x}\frac{Li_{2}(t)}{1-t}\mathrm{d}t-\int_{0}^{x}\frac{\log(t)Li_{2}(t)}{1-t}\mathrm{d}t\\
&=\log(x)\int_{0}^{x}\frac{Li_{2}(t)}{1-t}\mathrm{d}t+\log(x)\log(1-x)Li_{2}(x)+\frac{1}{2}Li_{2}(x)^{2}\\
&\quad +\int_{0}^{x}\frac{\log(t)\log^{2}(1-t)}{t}\mathrm{d}t\,,\\
&\quad \sum_{n=1}^\infty \frac{H_{n}^{(2)}}{(n+1)^{2}}(-x)^{n+1}\\
&=\log(x)\int_{0}^{-x}\frac{Li_{2}(t)}{1-t}\mathrm{d}t+\int_{0}^{x}\frac{\log(t)Li_{2}(-t)}{1+t}\mathrm{d}t\\
&=\log(x)\int_{0}^{-x}\frac{Li_{2}(t)}{1-t}\mathrm{d}t+\log(x)\log(1+x)Li_{2}(-x)
+\frac{1}{2}Li_{2}(-x)^{2}\\
&\quad +\int_{0}^{x}\frac{\log(t)\log^{2}(1+t)}{t}\mathrm{d}t\,.
\end{align*}
With the help of Lemmata \ref{lem0}, \ref{lem0000} and Theorem \ref{thm2}, we get the desired result.
\end{proof}

\begin{Remark}
It seems difficult to give explicit expressions for infinite series of types
$\sum_{n=1}^\infty \frac{H_{n}}{n^{4}}x^{n}$ and
$\sum_{n=1}^\infty \frac{H_{n}^{(2)}}{n^{3}}x^{n}$, since the integrals $\int_{0}^{x}\frac{\log^{2}(t)\log^{2}(1-t)}{t}\mathrm{d}t$ and $\int_{0}^{x}\frac{\log^{2}(t)Li_{2}(t)}{1-t}\mathrm{d}t$ are not known to be related to the polylogarithm functions.
\end{Remark}

\begin{Example}
Some illustrative examples are as following:
\begin{align*}
&\sum_{n=1}^\infty \frac{H_{n}}{n^{3}\cdot2^{n}}
=Li_{4}\bigg(\frac{1}{2}\bigg)+\frac{1}{720}\pi^{4}-\frac{1}{8}\log(2)\zeta(3)
+\frac{1}{24}\log^{4}(2)\,,\\
&\sum_{n=1}^\infty \frac{H_{n}}{n^{3}\cdot2^{n}}(-1)^{n}
=2Li_{4}\bigg(-\frac{1}{2}\bigg)+Li_{4}\bigg(\frac{1}{3}\bigg)+Li_{4}\bigg(\frac{2}{3}\bigg)
-\frac{1}{90}\pi^{4}\\
&\qquad \qquad \qquad \quad +\log\bigg(\frac{3}{2}\bigg)Li_{3}\bigg(\frac{2}{3}\bigg)+\log\bigg(\frac{3}{2}\bigg)Li_{3}\bigg(\frac{1}{3}\bigg)\\
&\qquad \qquad \qquad \quad +\frac{1}{12}\pi^{2}\log^{2}\bigg(\frac{3}{2}\bigg)
-\frac{1}{12}\log^{4}\bigg(\frac{3}{2}\bigg)-\frac{1}{6}\log(6)\log^{3}\bigg(\frac{3}{2}\bigg)\,,\\
&\sum_{n=1}^\infty \frac{H_{n}}{n^{3}}(-1)^{n}
=2Li_{4}\bigg(\frac{1}{2}\bigg)-\frac{11}{360}\pi^{4}+\frac{7}{4}\log(2)\zeta(3)\\
&\qquad \qquad \qquad \quad -\frac{1}{12}\pi^{2}\log^{2}(2)+\frac{1}{12}\log^{4}(2)\,,\\
&\sum_{n=1}^\infty \frac{H_{n}^{(2)}}{(n+1)2^{n+1}}
=-\frac{1}{4}\zeta(3)+\frac{1}{12}\pi^{2}\log(2)-\frac{1}{6}\log^{3}(2)\,,\\
&\sum_{n=1}^\infty \frac{H_{n}^{(2)}}{n\cdot2^{n}}
=\frac{5}{8}\zeta(3)\,,\\
&\sum_{n=1}^\infty \frac{H_{n}^{(2)}(\sqrt{5}-1)^{n+1}}{(n+1)2^{n+1}}
=-\frac{2}{5}\zeta(3)-\frac{1}{5}\pi^{2}\log\bigg(\frac{\sqrt{5}-1}{2}\bigg)+\frac{2}{3}\log^{3}\bigg(\frac{\sqrt{5}-1}{2}\bigg)\,,\\
&\sum_{n=1}^\infty \frac{H_{n}^{(2)}(\sqrt{5}-1)^{n}}{n\cdot2^{n}}
=-\frac{2}{5}\zeta(3)-\frac{1}{5}\pi^{2}\log\bigg(\frac{\sqrt{5}-1}{2}\bigg)+\frac{2}{3}\log^{3}\bigg(\frac{\sqrt{5}-1}{2}\bigg)\\
&\qquad \qquad \qquad\qquad\quad +Li_{3}\bigg(\frac{\sqrt{5}-1}{2}\bigg)\,,\\
&\sum_{n=1}^\infty \frac{H_{n}^{(2)}}{n}(-1)^{n}=\frac{1}{12}\pi^{2}\log(2)-\zeta(3)\,,\\
&\sum_{n=1}^\infty \frac{H_{n}^{(2)}}{n^{2}\cdot2^{n}}
=Li_{4}\bigg(\frac{1}{2}\bigg)+\frac{1}{1440}\pi^{4}+\frac{1}{4}\log(2)\zeta(3)
-\frac{1}{24}\pi^{2}\log^{2}(2)+\frac{1}{24}\log^{4}(2)\,,\\
&\sum_{n=1}^\infty \frac{H_{n}^{(2)}}{n^{2}\cdot2^{n}}(-1)^{n}=
-Li_{4}\bigg(-\frac{1}{2}\bigg)-2\bigg(Li_{4}\bigg(\frac{1}{3}\bigg)+Li_{4}\bigg(\frac{2}{3}\bigg)
-\frac{1}{90}\pi^{4}\bigg)\\
&\qquad \qquad \qquad\qquad\quad -2\bigg(\log\bigg(\frac{3}{2}\bigg)Li_{3}\bigg(\frac{2}{3}\bigg)
+\log\bigg(\frac{3}{2}\bigg)Li_{3}\bigg(\frac{1}{3}\bigg)\bigg)\\
&\qquad \qquad \qquad\qquad\quad +\frac{1}{2}Li_{2}\bigg(-\frac{1}{2}\bigg)^{2}-\frac{1}{6}\pi^{2}\log^{2}\bigg(\frac{3}{2}\bigg)
+\frac{1}{2}\log^{4}\bigg(\frac{3}{2}\bigg)\\
&\qquad \qquad \qquad\qquad\quad +\frac{2}{3}\log(2)\log^{3}\bigg(\frac{3}{2}\bigg)\,.
\end{align*}
\end{Example}

\section{Infinite series containing generalized alternating harmonic numbers}
\begin{Prop}\label{prop}
Let $p, n \in \mathbb N$ and $k \in \mathbb N_{0}$ with $0 \leq k \leq p$, then we have
\begin{align*}
&\overline{H}_{n}^{(p)}
=\sum_{j=1}^{k}(-1)^{j}\sum_{\ell=1}^{n}\frac{(-1)^{n+1-\ell}(n+1)}{\ell^{j}(n+1-\ell)^{p+1-j}}
+(-1)^{k+1}\sum_{\ell=1}^{n}\frac{(-1)^{n+1-\ell}}{\ell^{k}(n+1-\ell)^{p-k}}\,.
\end{align*}
In particular, we have
\begin{align*}
&\overline{H}_{n}^{(p)}
=\frac{n+1}{2}\sum_{j=1}^{p}(-1)^{j}\sum_{\ell=1}^{n}\frac{(-1)^{n+1-\ell}}{\ell^{j}(n+1-\ell)^{p+1-j}}\quad (p+n\quad\hbox{even})\,,\\
&\sum_{j=1}^{p}(-1)^{j}\sum_{\ell=1}^{n}\frac{(-1)^{n+1-\ell}}{\ell^{j}(n+1-\ell)^{p+1-j}}
=0\quad (p+n\quad\hbox{odd})\,.
\end{align*}
\end{Prop}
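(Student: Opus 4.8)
The plan is to parallel the proof of Theorem~\ref{thm1} and its Corollary, now using the generating function of the alternating harmonic numbers. Since $\overline{H}_n^{(p)}$ is a sequence of partial sums, its ordinary generating function is
\begin{align*}
\sum_{n=1}^\infty \overline{H}_n^{(p)}x^n=\frac{1}{1-x}\sum_{j=1}^\infty \frac{(-1)^{j-1}}{j^p}x^j=\frac{-Li_{p}(-x)}{1-x}\,,
\end{align*}
and integrating once yields $\sum_{n=1}^\infty \frac{\overline{H}_n^{(p)}}{n+1}x^{n+1}=\int_0^x \frac{-Li_{p}(-t)}{1-t}\,\mathrm{d}t$.

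First I would integrate by parts repeatedly, as in Theorem~\ref{thm1}, but now one of the two polylogarithms carries the argument $-t$. Using $\frac{\mathrm{d}}{\mathrm{d}t}Li_{j+1}(t)=\frac{Li_{j}(t)}{t}$, $\frac{\mathrm{d}}{\mathrm{d}t}Li_{j+1}(-t)=\frac{Li_{j}(-t)}{t}$, $\frac{\mathrm{d}}{\mathrm{d}t}Li_{1}(t)=\frac{1}{1-t}$, and noting that all boundary terms vanish at $t=0$, the same telescoping as before should give, for every $0\leq k\leq p$,
\begin{align*}
\sum_{n=1}^\infty \frac{\overline{H}_n^{(p)}}{n+1}x^{n+1}=\sum_{j=1}^{k}(-1)^{j}Li_{j}(x)Li_{p+1-j}(-x)+(-1)^{k+1}\int_0^x \frac{Li_{k}(t)Li_{p-k}(-t)}{t}\,\mathrm{d}t\,.
\end{align*}
The sign pattern is most easily pinned down on the case $k=1$, where a single integration by parts produces the leading term $-Li_{1}(x)Li_{p}(-x)$, and on the degenerate case $k=0$, which must reproduce the original integral.

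Next I would compare the coefficients of $x^{n+1}$ on both sides. On the left this is $\overline{H}_n^{(p)}/(n+1)$. The coefficient of $x^{n+1}$ in $Li_{j}(x)Li_{p+1-j}(-x)$ is the convolution $\sum_{\ell=1}^n \frac{(-1)^{n+1-\ell}}{\ell^{j}(n+1-\ell)^{p+1-j}}$, while the integral term contributes the analogous convolution with an extra factor $1/(n+1)$. Multiplying the resulting identity through by $n+1$ gives exactly the first displayed formula of the Proposition.

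Finally, for the two \emph{in particular} statements I would specialize to $k=p$. Then $(n+1-\ell)^0=1$, so the remainder collapses to $(-1)^{p+1}\sum_{\ell=1}^n \frac{(-1)^{n+1-\ell}}{\ell^{p}}$; re-indexing this sum (using $(-1)^{-\ell}=(-1)^{\ell}$) identifies it with $(-1)^{n}\overline{H}_n^{(p)}$, so the remainder equals $(-1)^{p+n+1}\overline{H}_n^{(p)}$. Writing $S:=\sum_{j=1}^{p}(-1)^{j}\sum_{\ell=1}^n \frac{(-1)^{n+1-\ell}}{\ell^{j}(n+1-\ell)^{p+1-j}}$, the formula at $k=p$ reads $\overline{H}_n^{(p)}=(n+1)S+(-1)^{p+n+1}\overline{H}_n^{(p)}$. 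If $p+n$ is even the sign is $-1$ and we obtain $2\overline{H}_n^{(p)}=(n+1)S$; if $p+n$ is odd the sign is $+1$ and we obtain $(n+1)S=0$, hence $S=0$. I expect the main obstacle to be precisely this bookkeeping: keeping the signs correct through the integration by parts with mixed arguments $t,-t$, and then recognizing that the $k=p$ remainder folds back into $\overline{H}_n^{(p)}$ with the parity-dependent sign $(-1)^{p+n+1}$. As a cross-check, and an analysis-free alternative, one can prove the main formula by induction on $k$: the base case $k=0$ is the definition of $\overline{H}_n^{(p)}$ after re-indexing, and the inductive step reduces, term by term in $\ell$, to the trivial identity $(n+1)-(n+1-\ell)=\ell$.
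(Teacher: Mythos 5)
Your proposal is correct and follows essentially the same route as the paper: integrate the generating function $\int_0^x \frac{-Li_p(-t)}{1-t}\,\mathrm{d}t$, integrate by parts to obtain the telescoped identity $\sum_{j=1}^{k}(-1)^{j}Li_{j}(x)Li_{p+1-j}(-x)+(-1)^{k+1}\int_0^x \frac{Li_k(t)Li_{p-k}(-t)}{t}\,\mathrm{d}t$, and compare coefficients of $x^{n+1}$. You in fact supply details the paper leaves implicit --- the explicit convolution extraction and the $k=p$ parity argument folding the remainder back into $(-1)^{p+n+1}\overline{H}_n^{(p)}$ --- and both your bookkeeping and the side remark about an induction alternative check out.
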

\begin{proof}
Integrating the generating function of $\overline{H}_{n}^{(p)}$, we can write
\begin{align*}
&\quad \sum_{n=1}^\infty \frac{\overline{H}_{n}^{(p)}}{n+1}x^{n+1}=\int_{0}^{x}\frac{-Li_{p}(-t)}{1-t}\mathrm{d}t\\
&=-Li_{1}(x)Li_{p}(-x)+\int_{0}^{x}\frac{Li_{1}(t)Li_{p-1}(-t)}{t}\mathrm{d}t\\
&=-Li_{1}(x)Li_{p}(-x)+Li_{2}(x)Li_{p-1}(-x)-\int_{0}^{x}\frac{Li_{2}(t)Li_{p-2}(-t)}{t}\mathrm{d}t\\
&=\sum_{j=1}^{k}(-1)^{j}Li_{j}(x)Li_{p+1-j}(-x)+(-1)^{k+1}\int_{0}^{x}\frac{Li_{k}(t)Li_{p-k}(-t)}{t}\mathrm{d}t \quad (0 \leq k \leq p)\\
&=\sum_{j=1}^{p}(-1)^{j}Li_{j}(x)Li_{p+1-j}(-x)
+(-1)^{p}\int_{0}^{x}\frac{Li_{p}(t)}{1+t}\mathrm{d}t\,.
\end{align*}
Comparing the coefficients on both sides gives the desired result.
\end{proof}

\begin{Lem}(\cite[p.303-p.304]{Lewin})\label{lem1}
The following formulas are known:
\begin{align*}
&\quad \int_{0}^{t}\frac{\log(a+bt)}{c+et}\mathrm{d}t\\
&=\frac{1}{e}\log\bigg(\frac{ae-bc}{e}\bigg)\log\bigg(\frac{c+et}{c}\bigg)-\frac{1}{e}Li_{2}\bigg(\frac{b(c+et)}{bc-ae}\bigg)
+\frac{1}{e}Li_{2}\bigg(\frac{bc}{bc-ae}\bigg)\,\\
&=\frac{1}{2e}\log^{2}\bigg(\frac{b}{e}(c+et)\bigg)-\frac{1}{2e}\log^{2}\bigg(\frac{bc}{e}\bigg)
+\frac{1}{e}Li_{2}\bigg(\frac{bc-ae}{b(c+et)}\bigg)-\frac{1}{e}Li_{2}\bigg(\frac{bc-ae}{bc}\bigg)\,.
\end{align*}
\end{Lem}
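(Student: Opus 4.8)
The plan is to reduce the integrand to the standard dilogarithm kernel $\log(1-u)/u$ by a single linear substitution, read off the first closed form directly, and then pass to the second form via the dilogarithm inversion identity.

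First I would substitute $w=c+es$, writing $s$ for the integration variable and keeping $t$ for the upper limit, so that $\mathrm{d}s=\mathrm{d}w/e$, the limits become $w=c$ and $w=c+et$, and $a+bs=\tfrac{b}{e}\bigl(w-w_{0}\bigr)$ with $w_{0}=\tfrac{bc-ae}{b}$. The integral then becomes $\tfrac{1}{e}\int_{c}^{c+et}\tfrac{\log(b/e)+\log(w-w_{0})}{w}\,\mathrm{d}w$, whose constant piece integrates to $\tfrac{1}{e}\log(b/e)\log\bigl(\tfrac{c+et}{c}\bigr)$. Next I would split $\log(w-w_{0})=\log(-w_{0})+\log\bigl(1-\tfrac{w}{w_{0}}\bigr)$. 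The $\log(-w_{0})$ term again produces a $\log\cdot\log$ contribution, and combining $\log(b/e)+\log(-w_{0})=\log\bigl(\tfrac{ae-bc}{e}\bigr)$ reproduces the cross term of the first displayed formula. For the remaining $\tfrac{1}{e}\int\tfrac{\log(1-w/w_{0})}{w}\,\mathrm{d}w$, the substitution $u=w/w_{0}$ (so that $\mathrm{d}w/w=\mathrm{d}u/u$) turns it into $\tfrac{1}{e}\int\tfrac{\log(1-u)}{u}\,\mathrm{d}u=-\tfrac{1}{e}Li_{2}(u)$; evaluating between the limits and using $1/w_{0}=\tfrac{b}{bc-ae}$ yields exactly $-\tfrac{1}{e}Li_{2}\bigl(\tfrac{b(c+et)}{bc-ae}\bigr)+\tfrac{1}{e}Li_{2}\bigl(\tfrac{bc}{bc-ae}\bigr)$, which establishes the first expression.

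Finally, to obtain the second expression I would apply the inversion formula $Li_{2}(z)+Li_{2}(1/z)=-\tfrac{\pi^{2}}{6}-\tfrac{1}{2}\log^{2}(-z)$ to each dilogarithm. The two $\tfrac{\pi^{2}}{6}$ constants enter with opposite signs and cancel, the arguments become the reciprocals $\tfrac{bc-ae}{b(c+et)}$ and $\tfrac{bc-ae}{bc}$ appearing in the second form, and the generated $\log^{2}$ terms combine with the cross term from the first form. Organizing the computation by writing $\log(-z_{1})=A+\log(c+et)$ and $\log(-z_{2})=A+\log c$ for the common constant $A=\log b-\log(ae-bc)$, the difference of squares collapses everything into $\tfrac{1}{2e}\log^{2}\bigl(\tfrac{b}{e}(c+et)\bigr)-\tfrac{1}{2e}\log^{2}\bigl(\tfrac{bc}{e}\bigr)$, giving the second expression.

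The main obstacle is not the algebra but the bookkeeping of branches: the splitting $\log(w-w_{0})=\log(-w_{0})+\log(1-w/w_{0})$ and the inversion identity are valid only on suitable domains for $a,b,c,e$, so I would either restrict to a range of parameters where all the arguments remain real and the principal branches agree, or track the $2\pi i$ corrections explicitly, as in Lewin's treatment. Everything else is a routine computation once the substitution and the single dilogarithm identity are in place.
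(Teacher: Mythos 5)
The paper gives no proof of this lemma at all: it is quoted verbatim from Lewin (pp.~303--304), so there is no in-paper argument to compare against, and your proposal supplies the derivation the author outsources. Your argument is correct, and it is essentially the classical one found in Lewin: the substitution $w=c+es$ with $a+bs=\tfrac{b}{e}(w-w_0)$, $w_0=\tfrac{bc-ae}{b}$, the splitting $\log(w-w_0)=\log(-w_0)+\log(1-w/w_0)$, and the reduction to $\int\log(1-u)\,\mathrm{d}u/u=-Li_2(u)$ give the first form (the constants combine correctly, since $\log(b/e)+\log(-w_0)=\log\bigl(\tfrac{ae-bc}{e}\bigr)$), and the passage to the second form via $Li_2(z)+Li_2(1/z)=-\tfrac{\pi^2}{6}-\tfrac12\log^2(-z)$ checks out: with $A=\log b-\log(ae-bc)$ and $B=\log(b/e)$ one has $A+\log\bigl(\tfrac{ae-bc}{e}\bigr)=B$, so the difference of squares indeed collapses to $\tfrac{1}{2e}\log^2\bigl(\tfrac{b}{e}(c+et)\bigr)-\tfrac{1}{2e}\log^2\bigl(\tfrac{bc}{e}\bigr)$. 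Your branch caveat is not idle pedantry, either: the paper itself applies the lemma in Theorem~\ref{thm3} with $e=-1$ and with $b=-1$, so intermediate quantities like $\log(b/e)$ are logarithms of negative numbers even though every argument in the final formulas is real. A tidy way to sidestep the $2\pi i$ bookkeeping entirely is to note that both sides of the first form vanish at $t=0$ and differentiate: using $\tfrac{\mathrm{d}}{\mathrm{d}z}Li_2(z)=-\log(1-z)/z$ and the identity $1-\tfrac{b(c+et)}{bc-ae}=\tfrac{e(a+bt)}{ae-bc}$, the derivative of the right-hand side collapses to $\log(a+bt)/(c+et)$, which legitimizes the formula on any parameter range where both sides are continuous, without tracking branches through the intermediate splittings.
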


\begin{theorem}\label{thm3}
Let $\lvert x \lvert \leq 1$, then we have
\begin{align*}
\sum_{n=1}^\infty \frac{\overline{H}_{n}}{n}x^{n}
&=-\log(2)\log(1-x)+Li_{2}\bigg(\frac{1}{2}(1-x)\bigg)-Li_{2}\bigg(\frac{1}{2}\bigg)-Li_{2}(-x)\\
&=-\log(1-x)\log(1+x)+\log(2)\log(1+x)-Li_{2}\bigg(\frac{1}{2}(1+x)\bigg)\\
&\quad +Li_{2}\bigg(\frac{1}{2}\bigg)-Li_{2}(-x)\,.
\end{align*}
\end{theorem}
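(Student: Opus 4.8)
The plan is to collapse the series into a single integral and then evaluate that integral by Lemma \ref{lem1}. Since $\overline{H}_n=\overline{H}_{n-1}+(-1)^{n-1}/n$, I would first split $\sum_{n\ge1}\frac{\overline{H}_n}{n}x^n=\sum_{n\ge1}\frac{\overline{H}_{n-1}}{n}x^n+\sum_{n\ge1}\frac{(-1)^{n-1}}{n^2}x^n$. The second sum is exactly $-Li_2(-x)$, and after the shift $m=n-1$ (using $\overline{H}_0=0$) the first sum becomes $\sum_{m\ge1}\frac{\overline{H}_m}{m+1}x^{m+1}$, which by the $p=1$ case of Proposition \ref{prop} equals $\int_0^x\frac{-Li_1(-t)}{1-t}\mathrm{d}t=\int_0^x\frac{\log(1+t)}{1-t}\mathrm{d}t$. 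Thus $\sum_{n\ge1}\frac{\overline{H}_n}{n}x^n=\int_0^x\frac{\log(1+t)}{1-t}\mathrm{d}t-Li_2(-x)$, and the whole problem reduces to this one integral.

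For the first displayed form I would apply the first expression in Lemma \ref{lem1} to $\int_0^x\frac{\log(1+t)}{1-t}\mathrm{d}t$ with $a=b=c=1$ and $e=-1$. Then $ae-bc=-2$, $bc-ae=2$, $\frac1e=-1$, $\frac{c+et}{c}=1-t$, $\frac{b(c+et)}{bc-ae}=\frac{1-x}{2}$ and $\frac{bc}{bc-ae}=\frac12$, so the integral evaluates to $-\log(2)\log(1-x)+Li_2(\tfrac12(1-x))-Li_2(\tfrac12)$; adding the $-Li_2(-x)$ term produces the first equality.

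For the second form I would first integrate by parts via $\frac{\mathrm{d}}{\mathrm{d}t}(-\log(1-t))=\frac1{1-t}$, obtaining $\int_0^x\frac{\log(1+t)}{1-t}\mathrm{d}t=-\log(1-x)\log(1+x)+\int_0^x\frac{\log(1-t)}{1+t}\mathrm{d}t$, and then apply the first expression in Lemma \ref{lem1} to the new integral with $a=1$, $b=-1$, $c=1$, $e=1$. Here $ae-bc=2$, $bc-ae=-2$, $\frac1e=1$, $\frac{c+et}{c}=1+t$, $\frac{b(c+et)}{bc-ae}=\frac{1+x}{2}$ and $\frac{bc}{bc-ae}=\frac12$, giving $\log(2)\log(1+x)-Li_2(\tfrac12(1+x))+Li_2(\tfrac12)$ and hence the second equality. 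As an independent check, the two forms coincide directly from the reflection formula $Li_2(z)+Li_2(1-z)=\frac{\pi^2}{6}-\log z\log(1-z)$ at $z=\frac{1-x}{2}$ (so $1-z=\frac{1+x}{2}$), combined with the value $Li_2(\tfrac12)=\frac{\pi^2}{12}-\frac12\log^2 2$.

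The main obstacle is bookkeeping inside Lemma \ref{lem1}. If one instead tries to read off the second form from the \emph{second} expression of that lemma with $a=b=c=1$, $e=-1$, one is led to $\log^2(-(1-x))$ and $Li_2(2/(1-x))$, whose arguments are negative or exceed $1$, so producing the real single-valued second form that way forces a careful treatment of branch cuts. Routing through the integration-by-parts identity (equivalently, through the reflection formula) sidesteps this completely, leaving the only delicate point the matching of signs and arguments when the parameters $a,b,c,e$ are specialized.
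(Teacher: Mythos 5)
Your proposal is correct and follows essentially the same route as the paper: you reduce the series via $\overline{H}_n=\overline{H}_{n-1}+(-1)^{n-1}/n$ to the integral $\int_0^x\frac{\log(1+t)}{1-t}\,\mathrm{d}t$, evaluate it with the first expression of Lemma~\ref{lem1} using $a=b=c=1$, $e=-1$, and obtain the second form by integrating by parts and applying the same lemma with $a=c=e=1$, $b=-1$ --- exactly the parameter choices in the paper's proof. Your added consistency check via the dilogarithm reflection formula and the remark on avoiding branch-cut issues are sensible extras but do not change the argument.
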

\begin{proof}
Integrating the generating function of $\overline{H}_{n}$, we can write
\begin{align*}
&\quad \sum_{n=1}^\infty \frac{\overline{H}_{n}}{n+1}x^{n+1}=\int_{0}^{x}\frac{\log(1+t)}{1-t}\mathrm{d}t\\
&=-\log(1-x)\log(1+x)+\int_{0}^{x}\frac{\log(1-t)}{1+t}\mathrm{d}t\,.
\end{align*}
Set $a=b=c=1, e=-1$ and $a=c=e=1, b=-1$ in Lemma \ref{lem1} respectively, we have
\begin{align*}
&\int_{0}^{x}\frac{\log(1+t)}{1-t}\mathrm{d}t
=-\log(2)\log(1-x)+Li_{2}\bigg(\frac{1}{2}(1-x)\bigg)-Li_{2}\bigg(\frac{1}{2}\bigg)\,,\\
&\int_{0}^{x}\frac{\log(1-t)}{1+t}\mathrm{d}t=\log(2)\log(1+x)-Li_{2}\bigg(\frac{1}{2}(1+x)\bigg)+Li_{2}\bigg(\frac{1}{2}\bigg)\,.
\end{align*}
Note that
$$
\sum_{n=1}^\infty \frac{\overline{H}_{n}}{n}x^{n}=\sum_{n=1}^\infty \frac{\overline{H}_{n-1}}{n}x^{n}+\sum_{n=1}^\infty \frac{(-1)^{n-1}x^{n}}{n^{2}}\,,
$$
thus we get the desired result.
\end{proof}

\begin{Lem}\label{lem2}
Let $0 \leq x \leq 1$, then we have
\begin{align*}
&\quad \int_{0}^{x}\frac{\log(1-t)\log(1+t)}{t}\mathrm{d}t\\
&=\frac{1}{8}\log(x)\log^{2}(1-\sqrt{x})-\frac{1}{2}\log(x)\log^{2}(1-x)-\frac{1}{2}\log(x)\log^{2}(1+x)\\
&\quad +\frac{1}{3}\log^{3}(1+x)+\frac{1}{2}\log(1-\sqrt{x})Li_{2}(1-\sqrt{x})-\log(1-x)Li_{2}(1-x)\\
&\quad +\log(1+x)Li_{2}\bigg(\frac{1}{1+x}\bigg)-\frac{1}{2}Li_{3}(1-\sqrt{x})+Li_{3}(1-x)-\frac{3}{2}Li_{3}(1)\\
&\quad +Li_{3}\bigg(\frac{1}{1+x}\bigg)\,.
\end{align*}
\end{Lem}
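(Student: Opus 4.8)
The plan is to linearize the product $\log(1-t)\log(1+t)$ into a combination of squared logarithms, whose $t^{-1}$-weighted integrals are already recorded in Lemma~\ref{lem000}. The key algebraic step is the polarization identity
\[
\log(1-t)\log(1+t)=\tfrac12\Bigl(\log^{2}(1-t^{2})-\log^{2}(1-t)-\log^{2}(1+t)\Bigr),
\]
which is immediate from $\log(1-t)+\log(1+t)=\log(1-t^{2})$ together with $ab=\tfrac12\bigl((a+b)^{2}-a^{2}-b^{2}\bigr)$. Dividing by $t$ and integrating over $[0,x]$ then splits the target integral into
\[
\frac12\int_{0}^{x}\frac{\log^{2}(1-t^{2})}{t}\,\mathrm{d}t-\frac12\int_{0}^{x}\frac{\log^{2}(1-t)}{t}\,\mathrm{d}t-\frac12\int_{0}^{x}\frac{\log^{2}(1+t)}{t}\,\mathrm{d}t .
\]

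Two of these pieces are dispatched at once. The second and third integrals are exactly those evaluated in \eqref{Lewin1} and \eqref{Lewin2}, so they contribute the terms in $Li_{2}(1-x),\,Li_{3}(1-x)$ and $Li_{2}\bigl(\tfrac{1}{1+x}\bigr),\,Li_{3}\bigl(\tfrac{1}{1+x}\bigr)$ together with the associated powers of $\log(1-x)$, $\log(1+x)$, $\log(x)$, and the cubic term $\tfrac13\log^{3}(1+x)$. For the first integral I would substitute $u=t^{2}$, under which $\mathrm{d}t/t=\mathrm{d}u/(2u)$ and the upper limit becomes $x^{2}$; this reduces it to $\tfrac12\int_{0}^{x^{2}}\log^{2}(1-u)/u\,\mathrm{d}u$, which is once more governed by \eqref{Lewin1}, now evaluated at the argument coming from the substitution. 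In particular the logarithmic prefactor of \eqref{Lewin1} appears here as $\tfrac14\log(x^{2})=\tfrac12\log(x)$, which is the source of the lone $\log(x)$ multiplying those dilogarithm and trilogarithm terms.

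The remaining work is purely organizational. I would assemble the three evaluated pieces and collect like terms, paying particular attention to the constants: the three integrals enter with weights $+\tfrac14$, $-\tfrac12$, $-\tfrac12$, and each application of \eqref{Lewin1}/\eqref{Lewin2} carries an internal $2\,Li_{3}(1)$, so these combine to the stated $-\tfrac32 Li_{3}(1)$. I expect the main obstacle to be bookkeeping rather than any genuine analytic difficulty: keeping track of the sign and the factor $\tfrac12$ attached to each sub-integral, and above all being careful about the argument of the polylogarithms emerging from the $u=t^{2}$ substitution, which is the one place where it is easiest to slip. Since no special-function identity beyond Lemma~\ref{lem000} is needed, the closed form follows once the terms are gathered.
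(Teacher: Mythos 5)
Your plan is, in substance, the paper's own proof: the paper likewise starts from $\int_{0}^{x}\log^{2}(1-t^{2})/t\,\mathrm{d}t$, expands the square to isolate the cross term, and evaluates the two single-logarithm pieces by \eqref{Lewin1} and \eqref{Lewin2}. But the one step you explicitly flagged as the easiest place to slip --- the argument produced by the substitution $u=t^{2}$ --- is exactly where your computation and the paper's diverge, and it is the paper that slips. As you say, $u=t^{2}$ gives $\mathrm{d}t/t=\mathrm{d}u/(2u)$ and upper limit $x^{2}$, so
\begin{align*}
\int_{0}^{x}\frac{\log^{2}(1-t^{2})}{t}\,\mathrm{d}t=\frac{1}{2}\int_{0}^{x^{2}}\frac{\log^{2}(1-u)}{u}\,\mathrm{d}u\,,
\end{align*}
whereas the paper's proof asserts this equals $\frac{1}{2}\int_{0}^{\sqrt{x}}\log^{2}(1-t)/t\,\mathrm{d}t$, and the stated lemma inherits that error: every argument $1-\sqrt{x}$ should be $1-x^{2}$, with the prefactor $\frac{1}{8}\log(x)\log^{2}(1-\sqrt{x})$ becoming $\frac{1}{2}\log(x)\log^{2}(1-x^{2})$ since $\frac{1}{4}\log(x^{2})=\frac{1}{2}\log(x)$. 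Carrying your bookkeeping through (weights $+\frac{1}{4}$, $-\frac{1}{2}$, $-\frac{1}{2}$, each of \eqref{Lewin1}/\eqref{Lewin2} contributing an internal $2Li_{3}(1)$, whence $-\frac{3}{2}Li_{3}(1)$ exactly as you predicted) yields
\begin{align*}
\int_{0}^{x}\frac{\log(1-t)\log(1+t)}{t}\,\mathrm{d}t
&=\frac{1}{2}\log(x)\log^{2}(1-x^{2})-\frac{1}{2}\log(x)\log^{2}(1-x)-\frac{1}{2}\log(x)\log^{2}(1+x)\\
&\quad +\frac{1}{3}\log^{3}(1+x)+\frac{1}{2}\log(1-x^{2})Li_{2}(1-x^{2})-\log(1-x)Li_{2}(1-x)\\
&\quad +\log(1+x)Li_{2}\bigg(\frac{1}{1+x}\bigg)-\frac{1}{2}Li_{3}(1-x^{2})+Li_{3}(1-x)\\
&\quad +Li_{3}\bigg(\frac{1}{1+x}\bigg)-\frac{3}{2}Li_{3}(1)\,,
\end{align*}
which is \emph{not} the displayed right-hand side of the lemma.

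So the gap is not in your argument but in your final sentence: gathering your (correct) terms does not reproduce the statement as printed, because the statement is false at interior points. Numerically, at $x=\frac{1}{2}$ the integral is $\approx -0.1323$, which matches the corrected formula above, while the printed right-hand side evaluates to $\approx -0.0187$. The two expressions coincide only at the endpoints $x=0$ and $x=1$, where $\sqrt{x}=x^{2}$; at $x=1$ both give the classical value $-\frac{5}{8}\zeta(3)$, which is why the error escaped detection (the paper's worked examples all take $x=1$). Had you carried out the assembly you outlined, you would have caught this; as it stands you should state explicitly that your proof establishes the corrected identity, and note that the misprinted $\sqrt{x}$ propagates into Theorem \ref{thm4}, whose formulas need the same repair at interior $x$.
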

\begin{proof}
We start from
$$
\int_{0}^{x}\frac{\log^{2}(1-t^{2})}{t}\mathrm{d}t\,.
$$
This integral equals to
$$
\int_{0}^{x}\frac{\log^{2}(1-t)}{t}\mathrm{d}t+2\int_{0}^{x}\frac{\log(1-t)\log(1+t)}{t}\mathrm{d}t
+\int_{0}^{x}\frac{\log^{2}(1+t)}{t}\mathrm{d}t\,.
$$
It is obvious that
\begin{align*}
&\quad \int_{0}^{x}\frac{\log^{2}(1-t^{2})}{t}\mathrm{d}t
=\frac{1}{2}\int_{0}^{\sqrt{x}}\frac{\log^{2}(1-t)}{t}\mathrm{d}t\\
&=\frac{1}{4}\log(x)\log^{2}(1-\sqrt{x})+\log(1-\sqrt{x})Li_{2}(1-\sqrt{x})-Li_{3}(1-\sqrt{x})+Li_{3}(1)\,.
\end{align*}
With the help of (\ref{Lewin1}) and (\ref{Lewin2}), we get the desired result.
\end{proof}

\begin{theorem}\label{thm4}
Let $0 \leq x \leq 1$, then we have
\begin{align*}
&\quad \sum_{n=1}^\infty \frac{\overline{H}_{n}^{(2)}}{n}x^{n}\\
&=Li_{2}(-x)\log(1-x)-Li_{3}(-x)+\frac{1}{8}\log(x)\log^{2}(1-\sqrt{x})-\frac{1}{2}\log(x)\log^{2}(1-x)\\
&\quad -\frac{1}{2}\log(x)\log^{2}(1+x)
+\frac{1}{3}\log^{3}(1+x)+\frac{1}{2}\log(1-\sqrt{x})Li_{2}(1-\sqrt{x})\\
&\quad -\log(1-x)Li_{2}(1-x) +\log(1+x)Li_{2}\bigg(\frac{1}{1+x}\bigg)-\frac{1}{2}Li_{3}(1-\sqrt{x})\\
&\quad +Li_{3}(1-x)+Li_{3}\bigg(\frac{1}{1+x}\bigg)-\frac{3}{2}Li_{3}(1)\,,\\
&\quad \sum_{n=1}^\infty \frac{\overline{H}_{n}^{(2)}}{n}(-x)^{n}\\
&=Li_{2}(x)\log(1+x)-Li_{3}(x)+\frac{1}{8}\log(x)\log^{2}(1-\sqrt{x})-\frac{1}{2}\log(x)\log^{2}(1-x)\\
&\quad -\frac{1}{2}\log(x)\log^{2}(1+x)
+\frac{1}{3}\log^{3}(1+x)+\frac{1}{2}\log(1-\sqrt{x})Li_{2}(1-\sqrt{x})\\
&\quad -\log(1-x)Li_{2}(1-x) +\log(1+x)Li_{2}\bigg(\frac{1}{1+x}\bigg)-\frac{1}{2}Li_{3}(1-\sqrt{x})\\
&\quad +Li_{3}(1-x)+Li_{3}\bigg(\frac{1}{1+x}\bigg)-\frac{3}{2}Li_{3}(1)\,.
\end{align*}
\end{theorem}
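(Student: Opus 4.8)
The plan is to follow verbatim the generating-function template already deployed for Theorem \ref{thm2} and Theorem \ref{thm3}, with the heavy analytic lifting delegated to Lemma \ref{lem2}. First I would record the generating function of the second-order alternating harmonic numbers. Since $\sum_{j=1}^\infty (-1)^{j-1}x^j/j^2=-Li_2(-x)$, a Cauchy product against $1/(1-x)$ gives
\begin{align*}
\sum_{n=1}^\infty \overline{H}_{n}^{(2)}x^{n}=\frac{-Li_{2}(-x)}{1-x}\,,
\end{align*}
so that termwise integration yields $\sum_{n=1}^\infty \frac{\overline{H}_{n}^{(2)}}{n+1}x^{n+1}=\int_{0}^{x}\frac{-Li_{2}(-t)}{1-t}\mathrm{d}t$.

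Next I would integrate by parts, taking $\mathrm{d}v=\mathrm{d}t/(1-t)$ (so $v=-\log(1-t)$) and $u=-Li_{2}(-t)$. Using $\frac{\mathrm{d}}{\mathrm{d}t}Li_{2}(-t)=-\log(1+t)/t$, one has $\mathrm{d}u=\frac{\log(1+t)}{t}\mathrm{d}t$; the boundary contribution at $t=0$ vanishes and the one at $t=x$ gives $Li_{2}(-x)\log(1-x)$, leaving
\begin{align*}
\int_{0}^{x}\frac{-Li_{2}(-t)}{1-t}\mathrm{d}t
=Li_{2}(-x)\log(1-x)+\int_{0}^{x}\frac{\log(1-t)\log(1+t)}{t}\mathrm{d}t\,.
\end{align*}
This is exactly the integral evaluated in Lemma \ref{lem2}. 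To pass from the shifted index to the stated series I would use $\overline{H}_{n}^{(2)}=\overline{H}_{n-1}^{(2)}+(-1)^{n-1}/n^{2}$, whence $\sum_{n=1}^\infty \frac{\overline{H}_{n}^{(2)}}{n}x^{n}=\sum_{n=1}^\infty \frac{\overline{H}_{n-1}^{(2)}}{n}x^{n}+\sum_{n=1}^\infty \frac{(-1)^{n-1}}{n^{3}}x^{n}$, the last sum being $-Li_{3}(-x)$. Substituting the Lemma \ref{lem2} expression then produces the first formula, with the $Li_{2}(-x)\log(1-x)-Li_{3}(-x)$ prefix sitting in front of the polylogarithmic tail supplied by the lemma.

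For the second formula I would simply run the same computation with $x$ replaced by $-x$. The correction term becomes $-Li_{3}(x)$ and the boundary term becomes $Li_{2}(x)\log(1+x)$, while the surviving integral is $\int_{0}^{-x}\frac{\log(1-t)\log(1+t)}{t}\mathrm{d}t$. The key observation — and the reason the two stated formulas share an identical tail from $\frac{1}{8}\log(x)\log^{2}(1-\sqrt{x})$ onward — is that the integrand $\frac{\log(1-t)\log(1+t)}{t}$ is an odd function of $t$, so the substitution $t\mapsto -t$ gives $\int_{0}^{-x}=\int_{0}^{x}$. Hence the very same Lemma \ref{lem2} evaluation reappears unchanged, and only the prefix differs.

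I do not expect a genuine obstacle here, since the difficult polylogarithmic reduction has been isolated into Lemma \ref{lem2}; the theorem is essentially an assembly step. The two points demanding care are the sign bookkeeping in the integration by parts (correctly using $\frac{\mathrm{d}}{\mathrm{d}t}Li_{2}(-t)=-\log(1+t)/t$ and the branch of $v=-\log(1-t)$) and the oddness argument that identifies $\int_{0}^{-x}$ with $\int_{0}^{x}$; the latter is what makes the second identity follow from the first with almost no extra work.
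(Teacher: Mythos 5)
Your proposal is correct and follows essentially the same route as the paper: the same generating-function integral $\int_{0}^{x}\frac{-Li_{2}(-t)}{1-t}\,\mathrm{d}t$, the same integration by parts producing $Li_{2}(-x)\log(1-x)+\int_{0}^{x}\frac{\log(1-t)\log(1+t)}{t}\,\mathrm{d}t$, the same index-shift corrections $-Li_{3}(\mp x)$, and the same delegation of the hard integral to Lemma \ref{lem2}. Your explicit oddness argument showing $\int_{0}^{-x}=\int_{0}^{x}$ for the integrand $\frac{\log(1-t)\log(1+t)}{t}$ is a welcome justification of a step the paper leaves implicit when it writes the same integral in both identities.
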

\begin{proof}
Integrating the generating function of $\overline{H}_{n}^{(2)}$, we can write
\begin{align*}
&\sum_{n=1}^\infty \frac{\overline{H}_{n}^{(2)}}{n+1}x^{n+1}
=\log(1-x)Li_{2}(-x)+\int_{0}^{x}\frac{\log(1-t)\log(1+t)}{t}\mathrm{d}t\,,\\
&\sum_{n=1}^\infty \frac{\overline{H}_{n}^{(2)}}{n+1}(-x)^{n+1}
=\log(1+x)Li_{2}(x)+\int_{0}^{x}\frac{\log(1-t)\log(1+t)}{t}\mathrm{d}t\,.
\end{align*}
Note that
\begin{align*}
\sum_{n=1}^\infty \frac{\overline{H}_{n}^{(2)}}{n}x^{n}=\sum_{n=1}^\infty \frac{\overline{H}_{n-1}^{(2)}}{n}x^{n}+\sum_{n=1}^\infty \frac{(-1)^{n-1}x^{n}}{n^{3}}\,,\\
\sum_{n=1}^\infty \frac{\overline{H}_{n}^{(2)}}{n}(-x)^{n}=\sum_{n=1}^\infty \frac{\overline{H}_{n-1}^{(2)}}{n}(-x)^{n}-\sum_{n=1}^\infty \frac{x^{n}}{n^{3}}\,,
\end{align*}
with the help of Lemma \ref{lem2}, we get the desired result.
\end{proof}

\begin{Example}
Some illustrative examples are as following:
\begin{align*}
&\sum_{n=1}^\infty \frac{\overline{H}_{n}(-1)^{n}}{n}
=-\frac{1}{12}\pi^{2}-\frac{1}{2}\log^{2}(2)\,,\\
&\sum_{n=1}^\infty \frac{\overline{H}_{n}^{(2)}(-1)^{n}}{n}
=-\frac{13}{8}\zeta(3)+\frac{1}{6}\pi^{2}\log(2)\,.
\end{align*}
\end{Example}

\end{document}